\newcommand{\mytag}[2]{%
  \text{#1}%
  \@bsphack
  \begingroup
    \@onelevel@sanitize\@currentlabelname
    \edef\@currentlabelname{%
      \expandafter\strip@period\@currentlabelname\relax.\relax\@@@%
    }%
    \protected@write\@auxout{}{%
      \string\newlabel{#2}{%
        {#1}%
        {\thepage}%
        {\@currentlabelname}%
        {\@currentHref}{}%
      }%
    }%
  \endgroup
  \@esphack
}
\newcommand{\pp}{\mathbf{P}}
\newcommand{\bm}{\mathbf{m}}
\newcommand{\bx}{\mathbf{x}}
\newcommand{\calS}{\mathcal{S}}
\newcommand{\by}{\mathbf{y}}
\newcommand{\bv}{\mathbf{v}}
\newcommand{\rel}[1][N]{V^{}_{\mathrm{rel}}(\bm(0),T)}
\newcommand\ba{\mathbf{a}}
\newcommand\bs{\mathbf{s}}
\newcommand\bzero{\mathbf{0}}
\newcommand\calO{\mathcal{O}}
\newcommand\calE{\mathcal{E}}
\newcommand\calON{\mathcal{O}(1/\sqrt{N})}
\newcommand\calA{\mathcal{A}}
\newcommand\calU{\mathcal{U}}
\newcommand\calL{\mathcal{L}}
\newcommand{\expect}[1]{\mathbb{E}\left[#1\right]}
\newcommand{\proba}[1]{\mathbb{P}\left(#1\right)}
\newcommand{\abs}[1]{\left|#1\right|}
\newcommand{\norme}[1]{\left\| #1 \right\|_1}
\newcommand\calUN{\mathcal{U}^{(N)}}
\newcommand\DeltaN{\Delta^{(N)}}
\newcommand\piN{\pi^{N}}
\newcommand\bX{\mathbf{X}}
\newcommand\bU{\mathbf{U}}
\newcommand\bu{\mathbf{u}}
\newcommand\br{\mathbf{r}}
\newcommand\limT{\lim_{T \rightarrow \infty} \frac{1}{T} \sum_{t=0}^{T-1}}
\newcommand\limN{\lim_{N \rightarrow \infty} }
\newcommand\palpha{\mathbf{P}^{\alpha}}
\newcommand\pimax{\pi_{\text{align\&}\ell}}
\newcommand\pidelay{\pi_{\text{delay\&}\ell}}
\newcommand\barU{\overline{U}}
\newcommand\barbU{\overline{\mathbf{U}}}
\newcommand\bh{\mathbf{h}}
\newcommand\bone{\mathbf{1}}
\newcommand\calC{\mathcal{C}}
\newcommand\pil{\pi_{\ell}}
\newtheorem{defi}{\textbf{Definition}}
\newtheorem{thm}{\textbf{Theorem}}
\newtheorem{lem}{\textbf{Lemma}}
\title{\LARGE \bf
 An Optimal-Control Approach to Infinite-Horizon Restless Bandits: Achieving Asymptotic Optimality with Minimal Assumptions
}
\author{Chen Yan 
\thanks{Chen Yan is with Statify, Inria, 38334 Saint Ismier, France and Biostatistics and Spatial Processes, INRAE, 84914 Avignon, France.
        {\tt\small chen.yan@inria.fr}}%
}
\begin{document}

\maketitle 

\thispagestyle{empty} 
\pagestyle{empty}

\begin{abstract}
We adopt an optimal-control framework for addressing the undiscounted infinite-horizon discrete-time restless $N$-armed bandit problem. Unlike most studies that rely on constructing 
policies based on the relaxed single-armed Markov Decision Process (MDP), we propose relaxing the entire bandit MDP as an optimal-control problem through the certainty equivalence control 
principle. Our main contribution is demonstrating that the reachability of an optimal stationary state within the optimal-control problem is a sufficient condition for the existence of an 
asymptotically optimal policy. Such a policy can be devised using an "align and steer" strategy. This reachability assumption is less stringent than any prior assumptions imposed on the 
arm-level MDP, notably the unichain condition is no longer needed. Through numerical examples, we show that employing model predictive control for steering generally results in superior 
performance compared to other existing policies. 
\end{abstract}

\section{Introduction}

The Restless Bandit (RB) problem addresses the challenge of optimally allocating limited resources across a set of dynamically evolving alternatives \cite{whittle-restless}. Each 
alternative, or "arm", changes state over time according to a Markov Decision Process (MDP), irrespective of whether it is currently being exploited or not, hence the term "restless". This 
problem encapsulates a broad range of real-world scenarios, from queue management  and sensor scheduling  to wireless communication  and adaptive clinical trials . Despite its theoretical 
and practical significance, finding optimal solutions remains notoriously challenging \cite{Papadimitriou99thecomplexity}, driving ongoing research into efficient heuristics and 
asymptotically optimal policy design \cite{Ve2016.6,Brown2020IndexPA,gast2023linear,hong2023restless}. This paper contributes to this vibrant field by proposing an optimal-control framework 
that offers fresh insights into the asymptotic optimality of policies for the RB problem. 


\textbf{Contributions:} 
\begin{itemize}
  \item We propose a novel approach by relaxing the stochastic bandit problem into a deterministic optimal-control problem, diverging from the conventional strategy of relaxation into a 
      single-armed problem (see Figure~\ref{fig:flowchart}). 
  \item We link asymptotic optimality in the bandit problem to the reachability of an optimal stationary point via feasible control, bypassing the unichain assumption for a broader 
      applicability that includes multichain models.
  \item We propose the "align and steer" strategy for constructing asymptotically optimal policies, assuming reachability. Our numerical studies highlight the superiority of integrating 
      model predictive control within this strategy. 
\end{itemize}


\textbf{Notations:} To differentiate between the single-armed MDP and the $N$-armed bandit MDP, we use the letter $s$ to denote the state of the former, which assumes a finite set of $S$ 
values, and $\bx, \bX$ for the state of the latter, represented as a population vector within the unit simplex $\Delta$ of dimension $S$ upon dividing by $N$. For the bandit-level problem, 
capital letters indicate stochastic systems, lowercase for deterministic, and boldface for vectors, treated as row vectors. The subset $\DeltaN$ of $\Delta$ consists of points whose 
coordinates are multiples of $1/N$. Vector inequality $\bx \ge \by$ are defined componentwise.  We use \emph{control rule} $\pi$ for deterministic optimal-control problems and \emph{policy} 
$\piN$ for stochastic $N$-armed bandit MDPs. Control mappings are denoted as $\pi(\bx) = \bu$, with $\bx^\pi(t)$ (resp. $\bu^\pi(t)$) representing the state (resp. control) after applying 
$\pi$ over $t$ steps on an initial state $\bx(0)$.

\section{Problem Setup and Literature Review}  \label{sec:problem-formulation}

\subsection{Model Description}

Consider the undiscounted infinite-horizon discrete-time Restless Bandit (RB) problem with $N$ homogenous arms. Each arm itself is a Markov Decision Process (MDP) with state space $\calS := 
\{1,2,\dots,S \}$ and action space $\calA := \{0,1\}$. There is a budget constraint requiring that at each time step, exactly $\alpha N$ arms can take action $1$, with $0 < \alpha < 1$. For 
simplicity we assume that $\alpha N$ is always an integer. The state space of the $N$-armed bandit is therefore $\calS^{N}$ and the action space is a subset of $\calA^{N}$. The arms are 
weakly-coupled, in the sense that they are only linked through the budget constraint, i.e. for a given feasible action $\ba \in \calA^N$, the bandit transitions from a state $\bs \in 
\calS^N$ to state $\bs' \in \calS^N$ with probability $\proba{\bs' \mid \bs,\ba} = \prod_{n=1}^{N} \proba{ s'_n \mid s_n,a_n} = \prod_{n=1}^{N} P^{a_n}_{s_n, s'_n}$, where for each action 
$a_n = a \in \calA$, the matrix $\pp^a$ is a probability transition matrix of dimension $S \times S$. Upon choosing an action $\ba$ in state $\bs$, we receive an instant-reward 
$\sum_{n=1}^{N} r_{s_n}^{a_n}$, where $r_{s}^{a} \in \mathbb{R}$ depends on the state $s$ and action $a$. 

A \emph{Markovian} policy $\piN$ for the $N$-armed problem chooses at each time $t$ a feasible action $\ba(t)$ based solely on the current state $\bs(t)$. It is \emph{stationary} if in 
addition it does not depend on $t$. Our goal is to maximize the long-term average expected reward from all $N$ arms across all stationary policies, facing an exponentially large state and 
action space as $N$ increases. \footnote{ In contrast to stochastic and adversarial bandits, where the model is not fully known and the emphasis is on minimizing regret compared to a 
hindsight optimal \cite{lattimore2020bandit}, the current Markovian bandit setting assumes all problem parameters and the system states are known, focusing on the design of efficient and 
effective algorithms. } Formally, this bandit MDP with a given initial state $\bs(0)$ is formulated as: 
\begin{alignat}{2}
    & \underset{\piN }{\text{max}} \quad && V_{\piN}(\bs(0)) := \limT \frac{1}{N} \mathbb{E}_{\piN} \left[ \sum_{n=1}^{N} r_{s_n(t)}^{a_n(t)} \right] 
    \label{eq:problem-formulation-all-arm} \\
    & \text{s.t.} \quad && \begin{aligned}[t]
    \proba{\bs(t+1) \mid \bs(t),\ba(t)} = \prod_{n=1}^{N} P^{a_n(t)}_{s_n(t), s_n(t+1)}, 
    \end{aligned}  \label{eq:Markovian-evolution} \\
    & && \mathbf{a}(t) \cdot \mathbf{1}^\top = \alpha N, \quad \mathbf{a}(t) \in \{0,1\}^N  \quad \forall t \ge 0. \label{eq:constraint-all-t} 
\end{alignat}

The difficulty of the RB problem is that the $N$ arms are coupled by the constraints in Equation~\eqref{eq:constraint-all-t}, and the conventional approach begins with relaxing these 
constraints in Equation~\eqref{eq:constraint-all-t}, which must be met at every time $t$ with probability one, to a single time-averaged constraint in expectation: $\limT \mathbb{E}_{\piN} 
\left[ \ba(t) \cdot \bone^\top \right] = \alpha N$. This effectively decompose the $N$-armed problem into $N$ independent single-armed problem, each having the following form in relation to 
a single-armed policy $\bar{\pi}$ and an initial arm state $s(0)$: 
\begin{alignat}{2}
    & \underset{\bar{\pi} }{\text{max}} \quad && V_{\bar{\pi}}(s(0)) := \limT \mathbb{E}_{\bar{\pi}} \left[ r_{s(t)}^{a(t)} \right] \label{eq:problem-formulation-single-arm} \\
    & \text{s.t.} \quad && \mathbb{P}(s(t+1)  \mid s(t), a(t)) = P_{s(t), s(t+1)}^{a(t)}, \quad \forall t \ge 0  \notag \\
    & && \limT \mathbb{E}_{\bar{\pi}} \left[ a(t) \right] = \alpha.  \notag 
\end{alignat}
This single-armed problem can be equivalently formulated using state-action frequency variables, see \cite[Section 8.9.2]{Puterman:1994:MDP:528623} and 
Problem~\eqref{eq:problem-formulation-stationary-multichain} below. We denote by $\bar{\pi}^*$ as one such optimal single-armed policy with an optimal value $V_{\bar{\pi}^*}(s(0))$. Note 
that the initial arm state $s(0) \in \calS$ can be extended to a probability distribution in $\calS$, represented by a point $\bx(0) \in \Delta$. Under the \emph{unichain} assumption, the 
optimal value $V_{\bar{\pi}^*}(\bx(0))$ of Problem~\eqref{eq:problem-formulation-single-arm} is independent of the initial arm state distribution $\bx(0)$ \cite{Puterman:1994:MDP:528623}.

\subsection{The Approach via Optimal-Control}

In this paper, our approach is to approximate Problem~\eqref{eq:problem-formulation-all-arm} with an optimal-control problem via the Certainty Equivalence Control (CEC) principle 
(\cite[Chapter 6]{bertsekas2012dynamic}). Throughout this paper, we will not make the blanket assumption that the single-armed MDP is unichain. 

\subsubsection{Arm States Concatenation and the CEC Problem}

Given that the $N$ arms are homogeneous, representing the bandit state through the concatenation of arm states can significantly simplify subsequent analysis. To achieve this, denote by 
$\bX \in \DeltaN$ where $X_s$ is the fraction of arms in state $s \in \calS$, normalized by division by $N$. A similar notation goes for the control $\bU$, so that $U_s$ is the fraction of 
arms in state $s$ taking action $1$ under the control $\bU$. 

Using this arm states concatenation, the Markovian evolution of the bandit state in Equation~\eqref{eq:Markovian-evolution} can be expressed as $\bX(t+1) \overset{d}{=} \phi(\bX(t), \bU(t)) 
+ \mathcal{E}(\bX(t), \bU(t))$, where $\phi(\cdot, \cdot)$ is the deterministic \emph{linear} function: 
\begin{equation}  \label{eq:deterministic-transition}
  \phi(\bX, \bU) := (\bX - \bU) \cdot \pp^0 + \bU \cdot \pp^1,
\end{equation}
and $\mathcal{E}(\cdot, \cdot)$ is a Markovian random vector, whose properties are summarized in the following lemma, with a proof utilizing standard probability techniques available in 
\cite[Lemma 1]{gast2023linear}: 

\begin{lem}[\cite{gast2023linear}]  \label{lem:Markovian-transition-analysis} 
The random vector $\calE(\bX(t),\bU(t)) \overset{d}{=} \bX(t+1) - \phi(\bX(t),\bU(t))$ verifies:
  \begin{align*}
   & \expect{\calE(\bX,\bU) \mid \bX,\bU } = \bzero; \ \expect{\norme{\calE(\bX,\bU)} \mid \bX,\bU } \le \sqrt{S}/\sqrt{N}; \\
   & \proba{\norme{\calE(\bX,\bU)} \ge \xi \mid \bX,\bU } \le 2S \cdot e^{-2N \xi^2/S^2}.    
  \end{align*}
\end{lem}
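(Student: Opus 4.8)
The plan is to condition throughout on $(\bX,\bU)$ and to exploit that, since the arms are homogeneous, the law of $\bX(t+1)$ depends on the current configuration only through the per-(state,action) counts encoded by $\bX$ and $\bU$. This lets me write $N X_{s'}(t+1)=\sum_{n=1}^{N}\mathds{1}[s_n(t+1)=s']$ as a sum of $N$ \emph{independent} indicators, the $n$-th having success probability $p_n^{s'}:=P^{a_n}_{s_n,s'}$, where $(s_n,a_n)$ is the state–action pair of arm $n$. Everything then reduces to standard facts about normalized sums of independent bounded variables. For the mean, grouping the arms by their (state,action) pair gives $\expect{N X_{s'}(t+1)\mid\bX,\bU}=\sum_n p_n^{s'}=\sum_s\bigl[(X_s-U_s)P^0_{s,s'}+U_s P^1_{s,s'}\bigr]$, which is exactly $N\,\phi(\bX,\bU)_{s'}$ by the definition in Equation~\eqref{eq:deterministic-transition}; hence $\expect{\calE(\bX,\bU)\mid\bX,\bU}=\bzero$, establishing the first assertion.

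For the $L^1$ bound, the only subtlety is obtaining the factor $\sqrt{S}$ rather than a naive $S$. Since each $\calE_{s'}$ is centered, Jensen gives $\expect{|\calE_{s'}|\mid\bX,\bU}\le\sqrt{\var{X_{s'}(t+1)}}$, and independence across arms yields $\var{X_{s'}(t+1)}=\tfrac{1}{N^2}\sum_n p_n^{s'}(1-p_n^{s'})$. The key observation is that summing over $s'$ telescopes through $\sum_{s'}p_n^{s'}=1$, so that $\sum_{s'}\var{X_{s'}(t+1)}=\tfrac{1}{N^2}\sum_n\bigl(1-\sum_{s'}(p_n^{s'})^2\bigr)\le\tfrac{1}{N^2}\cdot N=\tfrac1N$. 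Applying Cauchy--Schwarz across the $S$ coordinates then gives $\expect{\norme{\calE(\bX,\bU)}\mid\bX,\bU}=\sum_{s'}\expect{|\calE_{s'}|}\le\sqrt{S}\bigl(\sum_{s'}\var{X_{s'}(t+1)}\bigr)^{1/2}\le\sqrt{S}/\sqrt{N}$, as claimed.

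For the tail bound I would first reduce the $L^1$ event to a single coordinate by pigeonhole: if $\norme{\calE(\bX,\bU)}\ge\xi$ then $|\calE_{s'}|\ge\xi/S$ for some $s'$, so a union bound gives $\proba{\norme{\calE(\bX,\bU)}\ge\xi\mid\bX,\bU}\le\sum_{s'}\proba{|\calE_{s'}|\ge\xi/S}$. Each $\calE_{s'}$ is a centered sum of $N$ independent terms $\tfrac1N\bigl(\mathds{1}[s_n(t+1)=s']-p_n^{s'}\bigr)$, each of range $1/N$, whence the sum of squared ranges equals $N\cdot N^{-2}=1/N$; Hoeffding's inequality then yields $\proba{|\calE_{s'}|\ge\xi/S}\le 2\exp(-2N\xi^2/S^2)$, and combining with the union bound over the $S$ coordinates produces the stated $2S\,e^{-2N\xi^2/S^2}$.

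The main obstacle is entirely in the constant of the $L^1$ estimate: it is the combination of Jensen, the telescoping identity $\sum_{s'}\var{X_{s'}(t+1)}\le 1/N$, and Cauchy--Schwarz that delivers $\sqrt{S}$ in place of the crude $S/2$ one gets from bounding each variance by $1/(4N)$ separately. The mean identity and the Hoeffding tail are routine once the conditional-independence representation of $\bX(t+1)$ given $(\bX,\bU)$ is in place.
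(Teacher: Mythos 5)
Your proof is correct, and it follows essentially the same route as the proof the paper defers to in \cite{gast2023linear}: conditional on $(\bX,\bU)$ the next configuration is a normalized sum of independent Bernoulli indicators, the mean identity gives $\expect{\calE \mid \bX,\bU}=\bzero$, Jensen plus the variance identity $\sum_{s'}\var{X_{s'}(t+1)}\le 1/N$ plus Cauchy--Schwarz yields the $\sqrt{S}/\sqrt{N}$ bound, and a per-coordinate Hoeffding bound with a union bound over the $S$ coordinates gives the stated tail. All constants match the lemma exactly, so nothing needs to be changed.
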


Given a state $\bx \in \Delta$, define the following two control sets:
\begin{align*}
  \calU(\bx) & := \left\{ \bu \mid  \bu \cdot \bone^\top = \alpha \text{ and } \bzero \le \bu \le \bx \right\}; \\
  \calUN(\bx) & := \left\{ \bu \mid  \bu \in \calU(\bx) \text{ and } N \cdot \bu \text{ is an integer vector} \right\};
\end{align*}
as well as the linear instant-reward function:
\begin{equation*} 
  R(\bx,\bu) := (\bx-\bu) \cdot (\br^0)^\top + \bu \cdot (\br^1)^\top.
\end{equation*}
Note that $\calU(\bx)$ is always non-empty, and $\calUN(\bx)$ is non-empty if $\bx \in \DeltaN$. A \emph{feasible control} $\pi$ is a map from $\bx \in \Delta$ to $\calU(\bx)$, while a 
\emph{feasible policy} $\piN$ maps $\bx$ into $\calUN(\bx)$. An equivalent formulation of Problem~\eqref{eq:problem-formulation-all-arm} using arm states concatenation (where $\bs(0)$ 
yields $\bx(0)$) is: 
\begin{alignat}{2}
    & \underset{\piN}{\text{max}} \quad && V_{\piN}(\bx(0)) := \limT \mathbb{E}_{\piN} \left[ R(\mathbf{X}(t),\mathbf{U}(t)) \right] 
    \label{eq:problem-formulation-population-representation-infinite-horizon} \\
    & \text{s.t.} \quad && \mathbf{X}(t+1) \overset{d}{=} \phi(\mathbf{X}(t), \mathbf{U}(t)) + \mathcal{E}(\mathbf{X}(t), \mathbf{U}(t)),  \notag \\ 
    & && \mathbf{U}(t) \in \calUN(\mathbf{X}(t)) \ a.s., \quad \forall t \ge 0. \notag  
\end{alignat}
The two requirements below Equation~\eqref{eq:problem-formulation-population-representation-infinite-horizon} result from arm states concatenation of 
Equations~\eqref{eq:Markovian-evolution} and \eqref{eq:constraint-all-t}, respectively. 

We now link Problem~\eqref{eq:problem-formulation-population-representation-infinite-horizon} to its corresponding CEC problem, where the uncertainties $\mathcal{E}(\cdot, \cdot)$ are 
assumed to be identically zero. Specifically, the CEC problem is defined as a maximization task over all stationary control rules $\pi$, described as follows: 
\begin{alignat}{2}
    & \underset{\pi}{\text{max}} \quad && V_{\pi}(\bx(0)) := \limT R(\mathbf{x}(t),\mathbf{u}(t)) \label{eq:problem-CEC-infinite-horizon} \\
    & \text{s.t.} \quad && \mathbf{x}(t+1) =  \phi \left(\bx(t), \bu(t) \right), \notag \\
    & && \mathbf{u}(t) \in \mathcal{U}(\mathbf{x}(t)), \quad \forall t \ge 0. \notag 
\end{alignat}
Bearing its resemblance to Problem~\eqref{eq:problem-formulation-population-representation-infinite-horizon}, the above Problem~\eqref{eq:problem-CEC-infinite-horizon} is now deterministic 
with \emph{uncountable} state and action space. In contrast to the single-armed MDP Problem~\eqref{eq:problem-formulation-single-arm} that relaxes the constraints 
\eqref{eq:constraint-all-t} into a \emph{single} time-averaged expectation constraint, the CEC problem relaxes these constraints into expectation constraints at \emph{every} time step, 
represented by $\bu(t) \cdot \bone^\top = \alpha$ for all $t \ge 0$. It is a \emph{linear} control problem where the set of feasible controls depends on the state. Note that there are two 
distinct paths that naturally lead to consider Problem~\eqref{eq:problem-CEC-infinite-horizon}: the first involves taking the large $N$ limit in 
Problem~\eqref{eq:problem-formulation-population-representation-infinite-horizon} and referring to Lemma~\ref{lem:Markovian-transition-analysis} as we previously discussed; the second 
entails taking the large $T$ limit from the finite-horizon RB relaxation to a linear program, as considered in various works \cite{Brown2020IndexPA,gast2023linear}. 

As Problem~\eqref{eq:problem-formulation-population-representation-infinite-horizon} is generally intractable \cite{Papadimitriou99thecomplexity}, our strategy employs a stationary control 
rule $\pi$ that optimally solves the more tractable Problem~\eqref{eq:problem-CEC-infinite-horizon}. From this, we construct an induced policy $\piN$ that matches as much as possible to 
$\pi$. This is made precise in the following definition: 

\begin{defi}(Induced Policy $\piN$ from Control Rule $\pi$)  \label{def:rounding-policy}
  For a feasible stationary control rule $\pi$ of Problem~\eqref{eq:problem-CEC-infinite-horizon}, a corresponding induced policy $\piN$ for 
  Problem~\eqref{eq:problem-formulation-population-representation-infinite-horizon} is defined as any stationary policy such that for an input state $\bX \in \DeltaN$ with 
  $\barbU = \pi(\bX)$, it outputs a control $\piN(\bX) = \bU \in \calUN(\bX)$ satisfying $\norme{\bU - \barbU} \le S/N$. \footnote{The requirement $\norme{\bU - \barbU} \le S/N$ can be 
  met by setting $U_s := (\lfloor N \cdot \barU_s \rfloor + Z_s)/N$, with $Z_s$ appropriately chosen to take values of either $0$ or $1$ so as to meet the budget constraint. Furthermore, a 
  more sophisticated randomized rounding strategy \cite[Section 2.3]{gast2023linear} can ensure $\expect{\bU} = \barbU$. }
\end{defi}

The general observation from Lemma~\ref{lem:Markovian-transition-analysis} is that if $\pi$ is optimal, then the induced policy $\piN$ as in Definition~\ref{def:rounding-policy} should also 
be close to optimal for large values of $N$. This will be precisely formulated in Theorem~\ref{thm:cv-rate} below. 

\subsubsection{Stationary Problems}

By definition, a \emph{stationary point} $(\bx_e, \bu_e)$ of Problem~\eqref{eq:problem-CEC-infinite-horizon} is one such that $\bu_e \in \calU(\bx_e)$ and $\bx_e = \phi(\bx_e,\bu_e)$. A 
stationary point $(\bx^*, \bu^*)$ is optimal if it solves the corresponding static problem with \eqref{eq:problem-CEC-infinite-horizon}. This is what we refer to as the \emph{conventional} 
static problem. In this paper, however, in order to also take into account multichain models, we shall consider the refined static problem with optimal value denoted as $V^*_e(\bx(0))$, 
following \cite{altman1995linear,hordijk1984constrained}:  
\begin{alignat}{2}
    & \underset{\mathbf{x}, \mathbf{u}, \mathbf{h}^0, \mathbf{h}^1}{\text{max}} \quad && V_e(\bx(0)) := R(\mathbf{x},\mathbf{u}) \label{eq:problem-formulation-stationary-multichain} \\
    & \text{s.t.} \quad && \mathbf{x} =  \phi \left( \bx,\bu \right), \notag \\
    & && \mathbf{u} \in \mathcal{U}(\mathbf{x}), \notag \\
    & && \mathbf{x} + \mathbf{h}^0 + \mathbf{h}^1 - \mathbf{h}^0 \cdot \mathbf{P}^0 - \mathbf{h}^1 \cdot \mathbf{P}^1 = \mathbf{x}(0), \label{eq:mass-multi} \\
    & && \mathbf{x} \ge \mathbf{0}, \ \mathbf{h}^0 \ge \mathbf{0}, \ \mathbf{h}^1 \ge \mathbf{0}. \notag
\end{alignat}
We recover the conventional static problem if in the above optimization problem Equation~\eqref{eq:mass-multi} is replaced by $\bx \cdot \bone^\top = 1$ and there are no $\bh^0, \bh^1$ 
variables. Problem~\eqref{eq:problem-formulation-stationary-multichain} is a refinement since multiply Equation~\eqref{eq:mass-multi} by $\bone^\top$ on the right gives the relation $\bx 
\cdot \bone^\top = 1$. The additional variables $\bh^0, \bh^1$ appearing in Problem~\eqref{eq:problem-formulation-stationary-multichain} can be interpreted as a deviation measure 
\cite{altman1995linear}. In fact, by \cite[Theorem 10]{hordijk1984constrained}, if the single-armed MDP is unichain, then for any initial condition $\bx(0)$ 
Problem~\eqref{eq:problem-formulation-stationary-multichain} is equivalent to the conventional static problem, so the two formulations make no difference. However, we will illustrate in 
Section~\ref{subsec:discussion-reachability} the necessity of considering the refined static problem for the more general multichain models. 

\subsubsection{Value Comparison and Asymptotic Optimality}

\begin{figure}[htbp]
  \centering
  \includegraphics[width=0.49\textwidth]{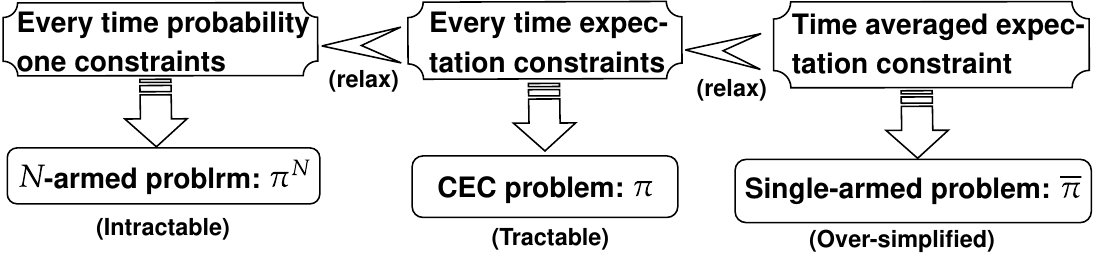}
   \caption{\small Relationship of the three optimization problems}
  \label{fig:flowchart}
\end{figure}
The links between the three major problems considered in this work are summarized in Figure~\ref{fig:flowchart}. The relationships among the values of the various optimization problems 
defined up to this point are as follows: 
\begin{align}  
 & V_{\piN}(\bs(0)) =  V_{\piN}(\bx(0)) \nonumber \\
 & \le V_{\pi}(\bx(0)) \le V_{\bar{\pi}^*}(\bx(0)) = V^*_e(\bx(0)),  \label{eq:relation-of-problem-values}
\end{align}
where the first relationship arises from the concatenation of arm states; the second is established in Lemma~\ref{lem:CEC-is-upperbound} of Section~\ref{appendix:proof-lemmas}, whose proof 
is based on induction on the horizon; the third results from Problem~\eqref{eq:problem-formulation-single-arm} being a more relaxed formulation than 
Problem~\eqref{eq:problem-CEC-infinite-horizon}; and the final relationship can be deduced, for instance, from \cite{hordijk1984constrained}, by treating $\bx-\bu$ and $\bu$ as state-action 
frequency variables. Equation~\eqref{eq:relation-of-problem-values} leads us to propose the following definition: 
\begin{defi}(Optimal Stationary Point and Asymptotic Optimality)  \label{def:asymptotic-optimality} 
For a given initial state $\bx_{\text{init}}$, we call $(\bx^*,\bu^*)$ that solves the refined static Problem~\eqref{eq:problem-formulation-stationary-multichain} with $\bx(0) = 
\bx_{\text{init}}$ an \emph{optimal stationary point}, and $\bx^*$ an optimal stationary state. We call a control rule $\pi$ of Problem~\eqref{eq:problem-CEC-infinite-horizon}  
\emph{averaged-reward optimal} if $V_{\pi}(\bx(0)) = V^*_e(\bx(0))$; the corresponding stationary policy $\piN$ of 
Problem~\eqref{eq:problem-formulation-population-representation-infinite-horizon} in Definition~\ref{def:rounding-policy} is said to be \emph{asymptotically optimal} for the $N$-armed RB 
problem, if it verifies $\limN V_{\piN}(\bx(0)) = V^*_e(\bx(0))$. 
\end{defi}

\subsection{Comparison with Related Works}

Existing literature on infinite-horizon RB problems typically constructs policies $\piN$ based on the single-armed problem in Figure~\ref{fig:flowchart}. We have demonstrated that the 
latter essentially conveys the same information as an optimal stationary point within this control problem framework, thus revealing that achieving asymptotic optimality necessitates 
additional model assumptions. 

Firstly, traditional methods assume the unichain condition for the single-armed MDP. Additionally, specific policies like the Whittle index policy \cite{WeberWeiss1990} and the fluid 
priority policy \cite{Ve2016.6} necessitate their induced dynamical systems conforming to the Global Attractor Property. Furthermore, to affirm the exponential convergence rate, 
\cite{gast2023linear} introduces a more stringent Uniform Global Attractor Property (UGAP). These assumptions on global dynamical system behavior are theoretically challenging to verify. In 
response, \cite{hong2023restless} proposes a more easily verifiable Synchronization Assumption for the optimal single-armed policy $\bar{\pi}^*$, achieving $\calON$ asymptotic optimality 
with the Follow-the-Virtual-Advice (FTVA) policy. Extending this approach, \cite{hong2024unichain} further simplifies the criteria, showing that the unichain and aperiodic assumptions on 
$\bar{\pi}^*$ are sufficient for their ID / Focus Set policies. Lemma~\ref{lem:property-of-pi0} will demonstrate that this sufficient condition implies our reachability condition in 
Definition~\ref{def:reachability}. On the other hand, without the unichain assumption, a stationary optimal single-arm policy may not even exist \cite{hordijk1984constrained}, yet our 
findings still enable the construction of asymptotically optimal policies, see Section~\ref{subsec:discussion-reachability}. It is important to recognize that verifying an MDP's unichain 
assumption is algorithmically demanding \cite{kallenberg2002classification}, making it practical to initially consider the problem as multichain \cite[Chapter 9]{Puterman:1994:MDP:528623}. 

A comprehensive comparison of various existing policies on the undiscounted infinite-horizon discrete-time RB problem is summarized in the table below. 
\begin{table}[ht]
\centering
\begin{tabular}{|p{3.05cm}|p{3.19cm}|p{1.1cm}|}
\hline
Policies           & Assumptions            & Rate  \\ \hline
Whittle / LP-Priority \cite{gast2023linear}    & UGAP \&Regular\& Unichain & $e^{-\Omega(N)}$  \\ \hline
FTVA \cite{hong2023restless}           & Synchronization \& Unichain      & $\calON$      \\ \hline
ID / Focus-Set \cite{hong2024unichain}       & Aperiodic \& Unichain  & $\calON$   \\ \hline
Align and Steer [this work]  & Weaker than Aperiodic   & $o(1)$      \\ \hline
\end{tabular}
\end{table}

\textbf{Roadmap:} Previous methods fall short in the more general framework, mainly due to their reliance on the overly simplified static problem. We overcome this issue by focusing on the 
dynamical yet still tractable CEC problem. Theorem~\ref{thm:cv-rate} demonstrates that for the induced policy $\piN$ to achieve asymptotic optimality, the corresponding control rule $\pi$ 
must be average-reward optimal, and additionally, a bias-related term (Equation~\eqref{eq:bias}) needs to be a continuous function. Theorem~\ref{thm:existence-of-good-control-rule} 
demonstrates that this can be achieved through the "align and steer" strategy (Algorithm~\ref{algo:maximum-alignment-control}) with a linear control for steering, provided that a mild 
reachability condition, as detailed in Definition~\ref{def:reachability}, is met by the model.

\section{Reachability and Asymptotic Optimality}  \label{sec:main-results}

Throughout this section, we fix an initial condition $\bx_{\text{init}} \in \Delta$ and a corresponding optimal stationary point $(\bx^*, \bu^*)$ of 
Problem~\eqref{eq:problem-formulation-stationary-multichain}. Denote by $\calS^+ := \left\{ s \in \calS \mid x^*_s > 0 \right\}$. It is important to remember that, in the case of multichain 
models, all quantities we deduce are dependent on the initial state. We will make this dependence explicit whenever possible.

\subsection{The Effective Control Rules}  \label{subsec:effective-control-rule}

In order to formulate the effectiveness of a control beyond average-reward optimality, we define, for a control rule $\pi$, the following possibly unbounded functions for all $\bx = 
\bx(0)$: 
\begin{equation}\label{eq:bias}
  G^{\pi}(\bx) := \sum_{t=0}^{\infty} (\bx^{\pi}(t),\bu^{\pi}(t)) - (\bx^*, \bu^*),   
\end{equation} 
where we recall that $(\bx^*, \bu^*)$ is an optimal stationary point by solving Problem~\eqref{eq:problem-formulation-stationary-multichain} with initial state $\bx$. We define a stationary 
control rule $\pi$ for Problem~\eqref{eq:problem-CEC-infinite-horizon} as \emph{effective} if $G^{\pi}(\bx)$ is a continuous function over $\bx \in \Delta$ under the $\mathcal{L}^1$-norm. 
This implies that $G^{\pi}(\cdot)$ is bounded given the compactness of $\Delta$. The concept and justification for an effective control rule are encapsulated in the subsequent theorem: 

\begin{thm}(Effective Control Rule Leads to Asymptotically Optimal Policy) \label{thm:cv-rate}
Fix an initial state $\bx_{\text{init}}$. For a stationary control rule $\pi$ of Problem~\eqref{eq:problem-CEC-infinite-horizon} with an optimal stationary point $(\bx^*, \bu^*)$ and 
optimal value $V^*_e(\bx(0))$ defined in Problem~\eqref{eq:problem-formulation-stationary-multichain} with $\bx(0) = \bx_{\text{init}}$, consider the function $G^{\pi}(\bx)$ defined in 
Equation~\eqref{eq:bias}. If $G^{\pi}(\bx)$ is a continuous function over $\bx \in \Delta$ (under the $\mathcal{L}^1$-norm). Then the induced stationary policy $\piN$ for 
Problem~\eqref{eq:problem-formulation-population-representation-infinite-horizon} in Definition~\ref{def:rounding-policy} is asymptotically optimal: $\limN V_{\piN}(\bx(0)) = 
V^*_e(\bx(0))$.  
\end{thm}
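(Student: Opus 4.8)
The plan is to use the bias function $G^\pi$ as a corrector that rewrites the long-run average reward of $\piN$ as a telescoping sum, so that its Ces\`aro limit is pinned to $V^*_e(\bx(0))$ up to an error that vanishes with $N$. Since $V_{\piN}(\bx(0)) \le V^*_e(\bx(0))$ holds for every $N$ by Equation~\eqref{eq:relation-of-problem-values}, only the lower bound genuinely needs work, but the estimate I set up is two-sided and delivers both. Because the reward $R$ is affine and $V^*_e = R(\bx^*,\bu^*)$, the shifted reward $(\bx,\bu)\mapsto R(\bx,\bu)-V^*_e$ equals a fixed linear functional $L$ applied to the deviation $(\bx,\bu)-(\bx^*,\bu^*)$. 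Applying $L$ termwise to Equation~\eqref{eq:bias} defines the scalar potential
\[
  W(\bx) := L\big(G^\pi(\bx)\big) = \sum_{t=0}^{\infty}\big[R(\bx^\pi(t),\bu^\pi(t)) - V^*_e\big],
\]
which, by the hypothesis that $G^\pi$ is continuous on the compact simplex $\Delta$, is continuous and bounded with a uniform modulus of continuity $\omega$ (so $\omega(\delta)\to0$ as $\delta\to0$ and $\omega\le 2\norminf{W}$).

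First I would record the Poisson equation for $W$. Peeling off the $t=0$ term and using stationarity of $\pi$ (the trajectory from time $1$ is the $\pi$-trajectory started at $\phi(\bx,\pi(\bx))$) gives, for every $\bx\in\Delta$,
\[
  W(\bx) = \big[R(\bx,\pi(\bx)) - V^*_e\big] + W\big(\phi(\bx,\pi(\bx))\big).
\]
Finiteness of the series already forces $(\bx^\pi(t),\bu^\pi(t))\to(\bx^*,\bu^*)$, so $\pi$ is automatically average-reward optimal; I will only use the displayed identity, which holds at every point and hence pathwise at the random states $\bX(t)$ visited under $\piN$.

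Next I would telescope along the stochastic trajectory, where $\bX(t+1)=\phi(\bX(t),\bU(t))+\calE(t)$ with $\bU(t)\in\calUN(\bX(t))$ and $\norme{\bU(t)-\pi(\bX(t))}\le S/N$ almost surely. Subtracting the Poisson identity at $\bX(t)$ shows that the one-step discrepancy decomposes as
\begin{align*}
  &\big[R(\bX(t),\bU(t)) - V^*_e\big] - \big[W(\bX(t)) - W(\bX(t+1))\big] \\
  &\quad = \big[R(\bX(t),\bU(t)) - R(\bX(t),\pi(\bX(t)))\big] \\
  &\qquad + \big[W\big(\phi(\bX(t),\bU(t))+\calE(t)\big) - W\big(\phi(\bX(t),\pi(\bX(t)))\big)\big].
\end{align*}
The first bracket is of order $S/N$ because $R$ is affine and the two controls differ by at most $S/N$ in $\calL^1$. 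For the second bracket, $\phi$ is linear (hence Lipschitz) in $\bu$, so its argument-difference is at most $\norme{\calE(t)}+c\,S/N$ for a constant $c$, whence the bracket is bounded by $\omega(\norme{\calE(t)}+c\,S/N)$. Taking expectations and invoking Lemma~\ref{lem:Markovian-transition-analysis}, I would bound $\expect{\omega(\norme{\calE(t)}+c\,S/N)}$ by splitting on $\{\norme{\calE(t)}\le\xi\}$: on this event the argument of $\omega$ is small, while its complement carries probability at most $2S\,e^{-2N\xi^2/S^2}$ on which $\omega\le 2\norminf{W}$; choosing e.g. $\xi=N^{-1/4}$ makes the expectation a quantity $\varepsilon(N)\to0$ that is uniform in $t$ and in the state.

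Summing the displayed identity over $t=0,\dots,T-1$, the $W$-increments telescope to $\expect{W(\bX(0))}-\expect{W(\bX(T))}$, bounded by $2\norminf{W}$, so
\[
  \Big|\tfrac1T\sum_{t=0}^{T-1}\expect{R(\bX(t),\bU(t))} - V^*_e\Big| \le \frac{2\norminf{W}}{T} + \varepsilon(N).
\]
Letting $T\to\infty$ first yields $|V_{\piN}(\bx(0))-V^*_e|\le\varepsilon(N)$, and then $N\to\infty$ gives the claim. The main obstacle is exactly that the hypothesis only provides \emph{continuity}, not Lipschitzness, of $G^\pi$: the fluctuations $\calE(t)$ and the rounding error cannot be controlled linearly, and must instead be absorbed through the uniform modulus $\omega$ coupled with the concentration bound of Lemma~\ref{lem:Markovian-transition-analysis}, which is also why the rate degrades to $o(1)$. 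A secondary point to handle with care is that the potential $W$ must be bounded and satisfy the Poisson equation on \emph{all} of $\Delta$ relative to the single reference point $(\bx^*,\bu^*)$ attached to $\bx_{\text{init}}$, since the noise can carry $\bX(t)$ anywhere in $\DeltaN$; this is precisely what continuity of $G^\pi$ on the compact simplex guarantees.
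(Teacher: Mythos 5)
Your proof is correct, and it reaches the conclusion by a route that differs in a meaningful way from the paper's. The core ingredients are the same: your Poisson equation for $W$ is exactly the paper's one-step identity $G^{\pi}(\bx^{\pi}(0)) - G^{\pi}(\bx^{\pi}(1)) = (\bx^{\pi}(0), \bu^{\pi}(0)) - (\bx^*,\bu^*)$ (Equation~\eqref{eq:relation-on-G-1}), scalarized through the reward functional, and your key estimate (uniform modulus of continuity on the compact simplex, combined with the concentration bound of Lemma~\ref{lem:Markovian-transition-analysis} via a cutoff $\xi$) is precisely the paper's bound on $\zeta(\bX)$ in Equation~\eqref{eq:auxilary-result}. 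The deployment differs, however: the paper applies the identity at the \emph{steady state} $\bX^{\piN}(\infty)$ of the chain induced by $\piN$ (Stein's method at stationarity), writing $V_{\piN}(\bx(0))$ as an expectation under the stationary law and converting the optimality gap into $\expect{\zeta(\bX^{\piN}(\infty))}$, whereas you \emph{telescope} the Poisson equation along the finite-horizon trajectory and take Ces\`aro limits in $T$ before letting $N \to \infty$. Your variant buys three things: (i) it never invokes a stationary distribution, so there is no need to identify the long-run average reward with a steady-state expectation (a point that requires some care when the induced chain is not ergodic, and which the paper handles implicitly through the notation $\bX^{\piN}(\infty)$); (ii) it makes the control-rounding error $\norme{\bU(t) - \pi(\bX(t))} \le S/N$ explicit in the reward term, whereas the first equality of Equation~\eqref{eq:relation-on-G-2} silently identifies $\bU^{\piN}(\infty)$ with $\pi(\bX^{\piN}(\infty))$, which strictly speaking needs the randomized-rounding refinement in the footnote of Definition~\ref{def:rounding-policy} or an extra $\calO(S/N)$ correction; (iii) your two-sided finite-$T$, finite-$N$ estimate would immediately yield a convergence rate if the modulus $\omega$ were linear, in line with the paper's remark that Lipschitz continuity of $G^{\pi}$ gives $\calON$ rates. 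What the paper's steady-state formulation buys in exchange is brevity (no bookkeeping of per-step discrepancies, no interchange of the $T$ and $N$ limits, since $T$ never appears) and a form that directly matches the Stein's-method literature it cites.
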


The proof of Theorem~\ref{thm:cv-rate} employs the standard Stein's method (\cite{gast:hal-01622054}) and is deferred to Section~\ref{appendix:proof-theorem-1}. Informally, for a control 
rule $\pi$ to be averaged-reward optimal, the state-control pairs $(\bx^{\pi}(t),\bu^{\pi}(t))$ need to converge to $(\bx^*, \bu^*)$ independently of the initial state. To establish and 
study refined notions of optimality beyond the average-reward criterion, particularly for comparison with the stochastic $N$-armed problem, we must consider a function of the type 
$G^{\pi}(\bx)$. We refer to \cite{carlson2012infinite} for an in-depth discussion of various optimality criteria in this context. 

It is important to note that with further regularity of the function $G^\pi(\bx)$ in the vicinity of $\bx^*$, i.e. Lipschitz-continuity or $\calC^1$-smoothness, the convergence rate of 
$V_{\piN}(\bx(0))$ towards $V^*_e(\bx(0))$ can be determined. Similar ideas have been explored in prior research, including \cite{gast:hal-01622054,gast2023exponential}. The primary 
challenge is determining whether such an effective control rule can be established and the methodology for its construction, which we aim to explore subsequently.

\subsection{The Align and Steer Policy}  \label{subsec:align-and-steer-policy}

Our idea of constructing an effective control rule can be summarized as "align and steer", which is based on the following observation from the linearity nature of 
Problem~\eqref{eq:problem-CEC-infinite-horizon}: If we decompose $\bx \in \Delta$ into a sum of two parts $\bx = \bv_{\text{align}} + \bv_{\text{steer}}$ with 
$\bv_{\text{align}},\bv_{\text{steer}} \ge \bzero$, then the normalized vectors $ \bx_{\text{align}} := \bv_{\text{align}} / \norme{\bv_{\text{align}}}$ and $ \bx_{\text{steer}} := 
\bv_{\text{steer}} / \norme{\bv_{\text{steer}}}$ again belong to the simplex $\Delta$. Now take $\bu_{\text{align}} \in \calU(\bx_{\text{align}})$ and $\bu_{\text{steer}} \in 
\calU(\bx_{\text{steer}})$ as feasible controls for $\bx_{\text{align}}$ and $\bx_{\text{steer}}$ respectively. The linear combination 
\begin{equation*}
  \norme{\bv_{\text{align}}} \cdot \bu_{\text{align}} + \norme{\bv_{\text{steer}}} \cdot \bu_{\text{steer}}
\end{equation*}
turns out to be a feasible control for $\bx$. The key is to split $\bx$ so that $\bv_{\text{align}}$ is \emph{collinear} with $\bx^*$ and possesses the \emph{maximum} possible 
$\calL^1$-norm. This enables the choice of $\bu_{\text{align}}$ as $\bu^*$ with a maximum alignment (refer to Equation~\eqref{eq:align-and-steer-control} below). 
  
\begin{defi}(Maximum Alignment Coefficient with $\bx^*$) \label{def:maximum-alignment-coef}
For $\bx \in \Delta$, we call the real constant 
\begin{equation}  \label{eq:maximum-alignment-coef}
  \delta(\bx) := \max \{ \delta \ge 0 \mid \bx \ge \delta \cdot \bx^*  \}
\end{equation} 
the maximum alignment coefficient of $\bx$ with the target $\bx^*$. 
\end{defi} 

From this definition, it follows that $0 \le \delta(\bx) \le 1$, with $\delta(\bx) = 0$ if and only if there exists an arm state $s \in \calS^+ = \{ s \in \calS \mid x^*_s > 0 \}$ with $x_s 
= 0$; and $\delta(\bx) = 1$ occurs if and only if $\bx = \bx^*$. For any $\bx \in \Delta$, it can be expressed as $\bx = \delta(\bx) \cdot \bx^* + \bx - \delta(\bx) \cdot \bx^*$. Here, the 
component $\bv_{\text{align}} = \delta(\bx) \cdot \bx^*$ represents the mass in $\bx$ already in alignment with stationarity, whereas $\bv_{\text{steer}} = \bx - \delta(\bx) \cdot \bx^*$ 
requires the application of a specific control, $\pi_{\text{steer}}$, designed to steer the remaining mass into $\calS^+$ for subsequent alignment. This concept is further explored in the 
following subsection. 

Now assume that a certain feasible $\pi_{\text{steer}}$ has been specified. The corresponding \emph{align and steer} control rule $\pi_{\text{align\&steer}}: \bx \mapsto \calU(\bx)$ is 
defined as: 
\begin{align}  
 & \pi_{\text{align\&steer}}(\bx) :=  \nonumber \\ 
 & \delta(\bx) \cdot \bu^* + (1 - \delta(\bx)) \cdot \pi_{\text{steer}} \left(  \frac{\bx - \delta(\bx) \cdot \bx^*}{1 - \delta(\bx)} \right).  \label{eq:align-and-steer-control}
\end{align}
We emphasize that $\delta(\bx)$ plays a crucial role in ensuring that $(\bx - \delta(\bx) \cdot \bx^*)/(1 - \delta(\bx))$ is a well-defined state vector in the simplex $\Delta$. Algorithm 
\ref{algo:maximum-alignment-control} describes the induced align and steer policy $\piN_{\text{align\&steer}}: \bx \mapsto \calUN(\bx)$ in detail. 

\begin{algorithm}
  \SetAlgoLined
  \SetKwInput{KwInput}{Input}
  \KwInput{A feasible steering control $\pi_{\text{steer}}$ of Problem~\eqref{eq:problem-CEC-infinite-horizon}; An initial state $\bX_{\text{init}}$ for the $N$-armed bandit 
  Problem~\eqref{eq:problem-formulation-population-representation-infinite-horizon}.}
  Solve the static Problem~\eqref{eq:problem-formulation-stationary-multichain} with initial state $\bx(0) = \bX_{\text{init}}$ to obtain an optimal stationary state $\bx^*$\;
  Set $\bX_{\text{current}} := \bX_{\text{init}}$ \;
  \For{$t = 0,1,2,\dots$}{
   Set $\barbU_{\text{current}} := \pi_{\text{align\&steer}}(\bX_{\text{current}})$ from Equation~\eqref{eq:align-and-steer-control} \;
   Compute a control $\bU_{\text{current}}$ with inputs $\bX_{\text{current}}$ and $\barbU_{\text{current}}$ as outlined in Definition~\ref{def:rounding-policy} \;
   Apply $\bU_{\text{current}}$ on $\bX_{\text{current}}$ and advance to the next state $\bX_{\text{next}}$, then set $\bX_{\text{current}} := \bX_{\text{next}}$ \;
  }
\caption{The align and steer policy $\piN_{\text{align\&steer}}$.}
  \label{algo:maximum-alignment-control}
\end{algorithm}

An advantage of the align and steer approach is the considerable flexibility in selecting the appropriate steering control, $\pi_{\text{steer}}$, in Algorithm 
\ref{algo:maximum-alignment-control}. Throughout the rest of this section, we focus on the linear steering control $\pil(\bx) = \alpha \cdot \bx$. Our objective is to introduce a mild 
reachability assumption, under which $\piN_{\text{align\&}\ell}$ is theoretically established to be asymptotically optimal. Subsequently, in the next Section~\ref{sec:numerical-example}, we 
adopt Model Predictive Control (MPC) as the steering control and conduct numerical studies on $\piN_{\text{align\&MPC}}$. 

\subsection{Reachability and a Linear Control Rule}  \label{subsec:reachability-and-a-linear-control-rule}

To introduce the key concept of reachability, we derive two observations from the construction in Equation~\eqref{eq:align-and-steer-control}. First, for any given $\bx \in \Delta$ with 
$\delta_0 := \delta(\bx)$ and any $t \ge 0$, the maximum alignment coefficient of $\bx^{\pi_{\text{align\&steer}}}(t)$, after applying $\pi_{\text{align\&steer}}$ on $\bx$ for $t$ steps, is 
at least as large as $\delta_0$. Second, defining $\hat{\bx} := (\bx - \delta_0 \cdot \bx^*)/(1 - \delta_0)$, then for a certain time $T_0 \ge 1$, the value of $\delta \left( 
\bx^{\pi_{\text{align\&steer}}}(T_0) \right)$ remains equal to $\delta_0$ if and only if $\pi_{\text{steer}}$ fails to steer mass from $\hat{\bx}$ into $\calS^+$ for alignment in the 
preceding $T_0 - 1$ steps. Equivalently, the maximum alignment coefficient of $\hat{\bx}^{\pi_{\text{steer}}}(t)$ is $0$ for \emph{all} $1 \le t \le T_0-1$ (note that $\delta(\hat{\bx})$ is 
$0$ by construction). 

\begin{defi}(Reachability of Optimal Stationary State $\bx^*$) \label{def:reachability}
Fix an initial state $\bx_{\text{init}}$. An optimal stationary state $\bx^*$ of the refined static Problem~\eqref{eq:problem-formulation-stationary-multichain} with $\bx(0) = 
\bx_{\text{init}}$ is called \emph{reachable}, if there exists a feasible and stationary control rule $\pi_{\text{steer}}$, a positive constant $\theta > 0$ and a finite time $T_0 \ge 1$ 
such that the maximum alignment coefficient in Definition~\ref{def:maximum-alignment-coef} satisfies $\delta \left( \bx^{\pi_{\text{steer}}}(T_0) \right) \ge \theta$, with 
$\bx^{\pi_{\text{steer}}}(0) = \bx$ for \emph{all} $\bx \in \Delta$. Otherwise we call $\bx^*$ \emph{unreachable}. 
\end{defi}

From this definition, if $\bx^*$ is unreachable, then for any feasible control $\pi_{\text{steer}}$ and for any $T_0 \ge 1$, there always exists a counterexample $\bx \in \Delta$ along with 
an arm state $s \in \calS^+$, such that the $s$-th coordinate of $\bx^{\pi_{\text{steer}}}(T_0)$ is $0$. This situation can arise due to the non-communicating nature and periodicity issues 
within the single-armed MDP. By definition, a MDP with state space $\calS$ is called \emph{weakly communicating} if $\calS$ can be partitioned into a closed set $\calS^c$ of states in which 
each state is accessible under some deterministic stationary policy from any other state in the set, plus a possibly empty set of states that are transient under every policy. An arm state 
$s \in \calS$ is \emph{aperiodic} under a single-armed policy $\bar{\pi}$ if $\gcd \left\{ n \in \mathbb{N} \mid (\pp^{\bar{\pi}})^n_{ss} > 0 \right\} = 1$, with $\pp^{\bar{\pi}}$ the 
transition matrix of the single-armed Markov chain induced by policy $\bar{\pi}$. 

We now consider the \emph{linear} control defined by $\pi_{\ell}(\bx) := \alpha \cdot \bx$. This is a feasible control according to the definition of $\calU(\bx)$. For $t \ge 0$, by 
plugging into Equation~\eqref{eq:deterministic-transition} this control rule we obtain that $\bx^{\pi_{\ell}}(t) = \bx \cdot (\palpha)^t$, where $\palpha := \alpha \cdot \pp^1 + (1 - 
\alpha) \cdot \pp^0$. Note that $\palpha$ is also the transition matrix of the single-armed Markov chain induced by policy $\bar{\pi}_{\ell}$ "always take action $1$ with probability 
$\alpha$". We argue that a certain communicating and aperiodic condition is sufficient for reachability: 
\begin{lem}(Weakly-Communicating and Aperiodic Single-Armed MDP Implies Reachability) \label{lem:property-of-pi0}
Fix an initial state $\bx_{\text{init}}$. Let $\bx^*$ be an optimal stationary state of the refined static Problem~\eqref{eq:problem-formulation-stationary-multichain} with $\bx(0) = 
\bx_{\text{init}}$, and denote by $\calS^+ = \left\{ s \in \calS \mid x^*_s > 0 \right\}$. If the single-armed MDP in Problem~\eqref{eq:problem-formulation-single-arm} is weakly 
communicating and the set of arm states $\calS^+$ are aperiodic under the single-armed policy $\bar{\pi}_{\ell}$ "always take action $1$ with probability $\alpha$", then $\bx^*$ is 
reachable. 
\end{lem}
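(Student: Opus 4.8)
The plan is to verify reachability directly with the linear steering control $\pil(\bx)=\alpha\cdot\bx$ already singled out in the text. Since $\bx^{\pi_\ell}(t)=\bx\cdot(\palpha)^t$ and since Definition~\ref{def:maximum-alignment-coef} gives $\delta(\by)=\min_{s\in\calS^+} y_s/x^*_s$, the reachability condition of Definition~\ref{def:reachability} reduces to producing one finite $T_0\ge 1$ and one $\theta>0$ such that $\big(\bx\cdot(\palpha)^{T_0}\big)_s \ge \theta\,x^*_s$ for every $s\in\calS^+$, uniformly over all $\bx\in\Delta$. The entire argument is thus about the long-run behaviour of the Markov chain with transition matrix $\palpha$.

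First I would pin down the chain structure of $\palpha$. Because $0<\alpha<1$, a transition $s\to s'$ has positive probability under $\palpha$ if and only if it does under $\pp^0$ or $\pp^1$, so any path feasible under some deterministic stationary policy is feasible under $\palpha$ (each step contributes a factor at least $\min(\alpha,1-\alpha)>0$ times the corresponding $\pp^a$-probability). Consequently the closed communicating set $\calS^c$ supplied by the weakly-communicating hypothesis is a single communicating class under $\palpha$; it is closed under $\palpha$, as a convex combination of $\pp^0,\pp^1$ cannot leave a set closed for both actions; and every state outside $\calS^c$, being transient under every policy, is transient under $\palpha$. Hence $\calS^c$ is the unique recurrent class of $\palpha$. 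I would then show $\calS^+\subseteq\calS^c$: letting $d^*$ be the randomized policy that takes action $1$ in $s\in\calS^+$ with probability $u^*_s/x^*_s$, the stationarity $\bx^*=\phi(\bx^*,\bu^*)$ makes $\bx^*$ a stationary distribution of $\pp^{d^*}$, so its support $\calS^+$ consists of recurrent states, which cannot be among those transient under every policy and therefore lie in $\calS^c$. Finally, since aperiodicity is a class property and $\calS^+\subseteq\calS^c$ is nonempty and aperiodic under $\bar{\pi}_\ell=\palpha$, the whole class $\calS^c$ is aperiodic.

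With $\palpha$ having a unique aperiodic recurrent class $\calS^c$ and all remaining states transient, Perron--Frobenius theory yields $(\palpha)^t\to\bone^\top\boldsymbol{\nu}$ at a geometric rate, where $\boldsymbol{\nu}$ is the unique stationary distribution of $\palpha$ and has full support on $\calS^c\supseteq\calS^+$. Left-multiplying by any $\bx\in\Delta$, so that $\bx\cdot\bone^\top=1$, gives $\bx\cdot(\palpha)^t\to\boldsymbol{\nu}$ geometrically and \emph{uniformly} over $\bx\in\Delta$. I would then fix $T_0$ large enough that $\norminf{\bx\cdot(\palpha)^{T_0}-\boldsymbol{\nu}}\le\tfrac12\min_{s\in\calS^+}\nu_s$ for all $\bx$; this forces $\big(\bx\cdot(\palpha)^{T_0}\big)_s\ge\tfrac12\nu_s$ for every $s\in\calS^+$, whence $\delta\big(\bx\cdot(\palpha)^{T_0}\big)\ge\theta$ with $\theta:=\tfrac12\min_{s\in\calS^+}\nu_s/x^*_s>0$, finitely and uniformly. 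This is exactly the reachability condition with $\pi_{\text{steer}}=\pi_\ell$, $T_0$, and $\theta$, completing the proof.

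The routine part is the convergence estimate of the last paragraph, a standard consequence of a single aperiodic recurrent class. The main obstacle is the structural step: translating the MDP-level hypotheses phrased over deterministic stationary policies into statements about the fixed randomized chain $\palpha$. The delicate points are (i) ruling out a spurious second recurrent class under $\palpha$, which I would handle by observing that a set closed and recurrent under $\palpha$ is closed under both $\pp^0$ and $\pp^1$ and hence would render its states recurrent under a deterministic policy, contradicting the weakly-communicating partition; and (ii) confirming that the support $\calS^+$ of the stationary state $\bx^*$ avoids the universally transient states.
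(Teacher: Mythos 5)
Your proof is correct and takes essentially the same route as the paper's: choose $\pi_{\text{steer}} = \pi_\ell$, use $\calS^+ \subseteq \calS^c$ together with aperiodicity to obtain, for some finite $T_0$, uniform positivity of the $\calS^+$-coordinates of $\bx \cdot (\palpha)^{T_0}$ over all $\bx \in \Delta$, and convert this into a uniform lower bound $\theta > 0$ on the alignment coefficient. The only differences are in execution, not in substance: you supply a justification of the inclusion $\calS^+ \subseteq \calS^c$ (which the paper asserts without proof) via the stationary distribution of the randomized policy induced by $(\bx^*, \bu^*)$, and you derive the uniform positivity from Perron--Frobenius convergence of $(\palpha)^t$ to $\bone^\top \boldsymbol{\nu}$, whereas the paper uses the more elementary finite-time minorization $p_0 := \min_{s \in \calS,\, s' \in \calS^+} (\palpha)^{T_0}_{s s'} > 0$, which avoids any limit argument.
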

\begin{proof}
Take $\pi_{\text{steer}} = \pil$ in Definition~\ref{def:reachability}. Since $\calS^+ \subset \calS^c$, combine with the aperiodicity assumption, there exists $T_0 \ge 1$ such that $\min_{s 
\in \calS, s' \in \calS^+} (\palpha)^{T_0}_{s s'} := p_0 
> 0$. Consequently for all $\bx \in \Delta$, it holds true that 
\begin{equation}  \label{eq:definition-of-theta}
  \delta \left( \bx^{\pi_{\ell}}(T_0) \right) = \delta \left( \bx \cdot (\palpha)^{T_0} \right) \ge \frac{p_0}{\max_{s \in S} x^*_s} := \theta > 0.
\end{equation}
\end{proof}

As a clarification, we point out that the condition of "being aperiodic under $\bar{\pi}_{\ell}$" is less stringent than "being aperiodic under an optimal single-armed policy $\bar{\pi}^*$" 
assumed in \cite{hong2024unichain}. This is because periodicity is a pure graph-theoretic question, and $\bar{\pi}_{\ell}$ leads to the maximum number of directed edges in the connectivity 
graph among all single-armed policies. In addition, we highlight that by refining the notion of reachability in Definition~\ref{def:reachability}, we can also accommodate non-communicating 
cases within our framework. The reason is that any MDP can be partitioned uniquely into communicating classes plus a (possible empty) class of states which are transient under any policy. 
This adaptation necessitates not that all $\bx \in \Delta$ must comply, but only those $\bx$ that possess \emph{the same} positive mass in communicating classes that are common with the 
initial state $\bx_{\text{init}}$. Notably, aperiodicity alone suffices for this refined definition of reachability. Due to space constraints and for clarity of presentation, we opt not to 
delve into this broader generality.

\subsection{Reachability Implies Asymptotic Optimality}  \label{subsec:main-theorem}

We are now ready to state the main result of this paper, which demonstrates that $\piN_{\text{align\&}\ell}$ is asymptotically optimal under the reachability assumption. The key idea is to 
compare the control rule $\pi_{\text{align\&}\ell}$ that maximize the alignment whenever possible with another rule $\pi_{\text{delay\&}\ell}$ that \emph{delay} the alignment. We emphasize 
that, while the function $G^{\pi_{\text{delay\&}\ell}}(\bx)$ for the latter is easier to handle, the delayed alignment control rule is not stationary and depends on the entire history of 
the deterministic state trajectory. As such it cannot be used to construct a stationary policy for the $N$-armed bandit. 

\begin{thm}(Reachability of $\bx^*$ Implies Asymptotic Optimality)  \label{thm:existence-of-good-control-rule}
Fix an initial state $\bx_{\text{init}}$. Suppose Problem~\eqref{eq:problem-formulation-stationary-multichain} with $\bx(0) = \bx_{\text{init}}$ possesses an optimal stationary state 
$\bx^*$ that is reachable as defined in Definition \ref{def:reachability}. Let $\pi_{\text{align\&}\ell}$ represent the align and steer control rule from 
Equation~\eqref{eq:align-and-steer-control}, with the linear steering control rule $\pi_{\ell}(\bx) = \alpha \cdot \bx$. In this case $\pi_{\text{align\&}\ell}$ is effective. Thus, in 
conjunction with Theorem~\ref{thm:cv-rate}, the policy $\piN_{\text{align\&}\ell}$ from Algorithm \ref{algo:maximum-alignment-control} is asymptotically optimal. 
\end{thm}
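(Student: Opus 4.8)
The plan is to prove that $\pimax$ is effective, i.e.\ that $G^{\pimax}(\bx)$ is $\calL^1$-continuous on $\Delta$; since $\Delta$ is compact this continuity automatically delivers boundedness, so effectiveness reduces entirely to continuity, after which Theorem~\ref{thm:cv-rate} gives asymptotic optimality. First I would reformulate the align-and-steer trajectory through the \emph{unnormalized} steering mass. Writing $\delta_t := \delta(\bx^{\pimax}(t))$ and $\bv(t) := \bx^{\pimax}(t) - \delta_t\,\bx^* \ge \bzero$, the linearity of $\phi$ and the choice $\pil(\bx)=\alpha\bx$ collapse Equation~\eqref{eq:align-and-steer-control} to the closed form $\pimax(\bx) = \delta(\bx)\,(\bu^*-\alpha\bx^*) + \alpha\,\bx$, together with $\bv(0) = \bx - \delta(\bx)\,\bx^*$ and the recursion $\bv(t+1) = \bv(t)\,\palpha - \delta(\bv(t)\,\palpha)\,\bx^*$. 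Using $\bv(t)$ rather than the normalized steering state is deliberate: it removes the $1/(1-\delta)$ singularity near $\bx^*$. Since $\delta(\cdot)$ is continuous (a minimum of the ratios $x_s/x^*_s$ over $s\in\splus$), the map $\pimax$, and hence every $\bx^{\pimax}(t)$ and $\bu^{\pimax}(t)$, is a continuous function of $\bx$. A short computation then yields the per-step identity $(\bx^{\pimax}(t),\bu^{\pimax}(t)) - (\bx^*,\bu^*) = -m_t\,(\bx^*,\bu^*) + (\bv(t),\alpha\,\bv(t))$ with $m_t := \norme{\bv(t)}$, so every summand of $G^{\pimax}$ is continuous and bounded in norm by $C\,m_t$.

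Granting continuity of the summands, $G^{\pimax}$ will be continuous once the series converges \emph{uniformly} in $\bx$, which amounts to geometric decay of $m_t$ at a rate independent of $\bx$. Establishing this uniform decay is the crux, and it is where the comparison with the delayed rule $\pidelay$ enters: $\pidelay$ performs pure $\palpha$-steering across each block of $T_0$ steps and realigns only at block ends, so reachability (Definition~\ref{def:reachability}, realized by the linear steering $\pil$ as in Lemma~\ref{lem:property-of-pi0}, giving $\delta(\bx\,(\palpha)^{T_0})\ge\theta$ for all $\bx$) applies to it verbatim and reduces its steering mass by a factor $(1-\theta)$ per block. I would then show that the greedy rule $\pimax$ aligns at least as fast. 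Over one block, comparing the greedy mass with pure steering $\mathbf{w}_j := \bv(kT_0)\,(\palpha)^j$, an induction using $\palpha\ge\bzero$ gives $\bv(kT_0+j)\le\mathbf{w}_j$ componentwise; and since $\pimax$ leaves each block alignment-free there is $s^*\in\splus$ with the $s^*$-coordinate of $\bv((k+1)T_0)$ equal to $0$. Unrolling the recursion at that coordinate, together with $(\bx^*(\palpha)^i)_{s^*}\le 1$ and the reachability bound $\mathbf{w}_{T_0}\ge\theta\,m_{kT_0}\,\bx^*$, forces the mass greedily removed over the block to be at least $\theta\,(\min_{s\in\splus} x^*_s)\,m_{kT_0} =: \theta'\,m_{kT_0}$ with $\theta'>0$ independent of $\bx$. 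Hence $m_{(k+1)T_0}\le (1-\theta')\,m_{kT_0}$, and since $m_t$ is nonincreasing this gives the uniform bound $m_t \le (1-\theta')^{\lfloor t/T_0\rfloor}$.

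With this uniform geometric bound in hand, the Weierstrass $M$-test (dominating the $t$-th summand by $C\,(1-\theta')^{\lfloor t/T_0\rfloor}$, which sums to $C\,T_0/\theta'$) shows that $G^{\pimax}$ is a uniform limit of continuous functions, hence continuous on $\Delta$. Thus $\pimax$ is effective, and invoking Theorem~\ref{thm:cv-rate} gives $\limN V_{\piN}(\bx(0)) = V^*_e(\bx(0))$, i.e.\ $\piN_{\text{align\&}\ell}$ is asymptotically optimal.

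I expect the main obstacle to be the quantitative decay step, namely converting the delayed rule's qualitative guarantee ``at least a $\theta$-fraction becomes alignable after $T_0$ pure-steering steps'' into the uniform statement ``the greedy rule actually removes at least a $\theta'$-fraction of its mass each block''. The delicacy is that greedy's early removals shrink the mass available for later alignment, so the comparison cannot be purely monotone; it must be made quantitative through the coordinate-$s^*$ accounting sketched above, and care is needed to keep $\theta'$ bounded away from $0$ uniformly over $\bx\in\Delta$ so that the $M$-test applies on all of $\Delta$.
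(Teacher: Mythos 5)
Your proof is correct, and while it shares the paper's overall skeleton (show that the continuous finite-horizon truncations $G^{\pimax}(\bx,T)$ converge uniformly on $\Delta$, then invoke Theorem~\ref{thm:cv-rate}), the crucial quantitative step is genuinely different from the paper's. The paper analyzes the delayed rule $\pidelay$, whose steering mass contracts by $(1-\theta)$ per block of $T_0$ steps directly from reachability, and then asserts the pointwise comparison $\delta(\bx^{\pidelay}(t)) \le \delta(\bx^{\pimax}(t))$ (its Step Two, Equation~\eqref{eq:argument-2-proof-good}), justified only by an informal mass-accounting argument. That comparison is exactly the delicate point you flag: aligned mass is held at $\bx^*$ via $\bu^*$ while unaligned mass evolves under $\palpha$, so when $\bu^* \ne \alpha \bx^*$ the delayed mass $c\,\bx^*(\palpha)^j$ differs from $c\,\bx^*$ and could in principle complement the steering residue better at a later time; the paper's two-line justification does not rule this out. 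Your argument sidesteps the comparison entirely: from the exact recursion $\bv(t+1) = \bv(t)\palpha - \delta(\bv(t)\palpha)\,\bx^*$ you unroll one block, $\bv((k+1)T_0) = \bv(kT_0)(\palpha)^{T_0} - \sum_{j=1}^{T_0} a_j\,\bx^*(\palpha)^{T_0-j}$ with $a_j := \delta(\bv(kT_0+j-1)\palpha)$, evaluate at a coordinate $s^* \in \splus$ where $\bv((k+1)T_0)$ vanishes (such $s^*$ exists at every step by maximality of $\delta$), and combine the reachability bound $\bv(kT_0)(\palpha)^{T_0} \ge \theta\, m_{kT_0}\,\bx^*$ with $(\bx^*(\palpha)^{T_0-j})_{s^*} \le 1$ to force $\sum_j a_j \ge \theta\,(\min_{s\in\splus} x^*_s)\, m_{kT_0}$; this is an identity-based bound that needs no monotone coupling between the two rules. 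The price is a slightly worse contraction constant, $\theta' = \theta \min_{s\in\splus} x^*_s$ per block instead of $\theta$, which is immaterial for the conclusion; the gains are rigor at the weakest link of the paper's proof, plus a cleaner continuity argument, since your unnormalized closed form $\pimax(\bx) = \delta(\bx)(\bu^* - \alpha\bx^*) + \alpha\bx$ removes the $0/0$ singularity of Equation~\eqref{eq:align-and-steer-control} at $\bx = \bx^*$ that the paper glosses over. One caveat you share with the paper: both proofs instantiate Definition~\ref{def:reachability} with the linear steering rule $\pil$ (as in Lemma~\ref{lem:property-of-pi0}), whereas the theorem's hypothesis only grants reachability by \emph{some} $\pi_{\text{steer}}$; to be complete one should add the easy observation that the support of any feasible trajectory is contained in that of the $\pil$-trajectory (since $\palpha$ puts positive weight on both $\pp^0$ and $\pp^1$), so reachability by an arbitrary steering rule implies reachability by $\pil$ with the same $T_0$ and a possibly smaller $\theta$, by continuity of $\bx \mapsto \delta(\bx(\palpha)^{T_0})$ and compactness of $\Delta$.
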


\begin{proof}
For each finite horizon $T \ge 1$, define 
\begin{equation*}
  G^{\pi}(\bx,T) := \sum_{t=0}^{T-1}  (\bx^{\pi}(t),\bu^{\pi}(t)) - (\bx^*, \bu^*).
\end{equation*}
The function $\bx \mapsto G^{\pi}(\bx,T)$ is a continuous function, provided that the control rule $\pi$ is a continuous map, which holds for $\pi_{\text{align\&}\ell}$. Our strategy for 
proving the continuity of $G^{\pimax}(\bx)$ is to show that the sequence of continuous functions $G^{\pimax}(\bx,T)$ indexed by $T$ converges \emph{uniformly} to $G^{\pimax}(\bx)$ over all 
$\bx \in \Delta$. 

\paragraph{Step One} To ease the notation, let us fix $\bx \in \Delta$ and write $\delta_0 := \delta(\bx)$. Consider another feasible control rule $\pi_{\text{delay\&}\ell}$ that delay the 
alignment in the following sense: We align a fixed amount $\delta_0$ of its mass with $\bx^*$ for the first $T_0$ time steps, where $T_0$ is defined as in Lemma~\ref{lem:property-of-pi0}, 
and constantly apply $\pi_{\ell}$ on the steering part. Formally, the control rule is 
\begin{equation}  \label{eq:lazy-control-rule}
  \pi_{\text{delay\&}\ell}(\bx') := \delta_0 \cdot \bu^* + (1 - \delta_0) \cdot \pi_{\ell} \left(  \frac{\bx' - \delta_0 \cdot \bx^*}{ 1 - \delta_0 } \right),
\end{equation}
for $\bx' = \bx^{\pi_{\text{delay\&}\ell}}(0) = \bx$, and subsequently for $\bx' = \bx^{\pi_{\text{delay\&}\ell}}(1), \dots, \bx^{\pi_{\text{delay\&}\ell}}(T_0-1)$. Denote by $\bx^{(1)} := 
(\bx - \delta_0 \cdot \bx^*)/(1 - \delta_0)$. Then the system trajectory under the control $\pi_{\text{delay\&}\ell}$ in Equation~\eqref{eq:lazy-control-rule} is 
\begin{equation*}
  \bx^{\pi_{\text{delay\&}\ell}}(t) = \delta_0 \cdot \bx^* + (1 - \delta_0) \cdot \bx^{(1)} \cdot (\palpha)^t, \ 0 \le t \le T_0. 
\end{equation*}
As a consequence of Lemma \ref{lem:property-of-pi0}, there exists $\bx^{(2)} \in \Delta$ such that $\bx^{(1)} \cdot (\palpha)^{T_0} = \theta \cdot \bx^* + (1 - \theta) \cdot \bx^{(2)}$, 
with $\theta > 0$ defined in Equation~\eqref{eq:definition-of-theta}. Hence
\begin{align*}
  & \bx^{\pi_{\text{delay\&}\ell}}(T_0)   \\
  = & \ ( 1 - (1 - \delta_0) \cdot (1 - \theta) ) \cdot \bx^* + (1 - \delta_0) \cdot (1 - \theta) \cdot \bx^{(2)}. 
\end{align*}
Now for time steps $T_0+1, T_0 +2, \dots, 2 T_0$ we repeat the same procedure, except that now we start with an alignment of mass $1 - (1 - \delta_0) \cdot (1 - \theta)$ with $\bx^*$. Note 
that 
\begin{equation*}
 \delta_0 < 1 - (1 - \delta_0) \cdot (1 - \theta) \le \delta \big( \bx^{\pi_{\text{delay\&}\ell}}(T_0) \big).
\end{equation*}
With the same reasoning we deduce that there exists $\bx^{(3)} \in \Delta$ such that 
\begin{align*}
  & \bx^{\pi_{\text{delay\&}\ell}}(2 T_0) \\
  = &  \left( 1 - (1 - \delta_0) \cdot (1 - \theta)^2 \right) \cdot \bx^* + (1 - \delta_0) \cdot (1 - \theta)^2 \cdot \bx^{(3)}.
\end{align*}
By a straightforward induction, we infer that for all integer $k \ge 1$, there exists $\bx^{(k+1)} \in \Delta$ such that 
\begin{align*}
  & \bx^{\pi_{\text{delay\&}\ell}}(k \cdot T_0)  \\
  = & \left( 1 - (1 - \delta_0) \cdot (1 - \theta)^k \right) \cdot \bx^*  + (1 - \delta_0) \cdot (1 - \theta)^k \cdot \bx^{(k+1)},
\end{align*}
and consequently 
\begin{equation}  \label{eq:argument-1-proof-good}
  \delta \big( \bx^{\pi_{\text{delay\&}\ell}}(t) \big) \ge 1 - (1 - \delta_0) \cdot (1 - \theta)^k
\end{equation}
for $k \cdot T_0 \le t < (k+1) \cdot T_0$.

\paragraph{Step Two} Our next observation is that for $t \ge 0$ it holds true that
\begin{equation}  \label{eq:argument-2-proof-good}
  \delta \left( \bx^{\pi_{\text{delay\&}\ell}}(t) \right) \le \delta \left( \bx^{\pi_{\text{align\&}\ell}}(t) \right).
\end{equation}
Indeed, delaying the alignment of a certain mass with $\bx^*$ invariably leads to a reduced maximum alignment coefficient in the future, compared to aligning this mass with $\bx^*$ at an 
earlier time. To illustrate, consider $0 < \delta_1 < \delta_0$ and express $\bx$ as follows: 
\begin{equation*}
  \bx = \delta_1 \cdot \bx^* + (\delta_0 - \delta_1) \cdot \bx^* + (1-\delta_0) \cdot \bx^{(1)}.
\end{equation*}
Should the mass amounting to $\delta_0 - \delta_1$ remain unaligned, it will be subjected to the linear control rule $\pil$, along with $\bx^{(1)}$. A portion of this mass may eventually 
become aligned at a later time. However, irrespective of the specific decisions made concerning $(\delta_0 - \delta_1) \cdot \bx^*$, what happens on $(1-\delta_0) \cdot \bx^{(1)}$ remains 
unchanged. Consequently, at any future point, the maximum alignment coefficient achieved by aligning $\delta_0$ at an initial step is always larger than that of aligning $\delta_1$ at the 
same juncture. 

\paragraph{Step Three} To conclude the proof, let $\varepsilon > 0$ be fixed. We define a finite horizon $T(\varepsilon) = k(\varepsilon) \cdot T_0$, where the value of $k(\varepsilon) \in 
\mathbb{N}$ will be chosen later. Then
\begin{align*}
  & \norme{G^{\pimax}(\bx) - G^{\pimax}(\bx,T(\varepsilon))} \\
 \le & \sum_{t=T(\varepsilon)}^{\infty} \left( 1 - \delta(\bx^{\pimax}(t)) \right)  \\
  & \qquad \cdot \left( \norme{ \widetilde{\bx}(t) - \bx^*} +  \norme{ \pil \left( \widetilde{\bx}(t) \right) - \bu^*} \right) \\
  & \left( \text{We abbreviate $ \frac{\bx^{\pimax}(t)-\delta(\bx^{\pimax}(t)) \cdot \bx^*}{1-\delta(\bx^{\pimax}(t))}$ as $\widetilde{\bx}(t)$}.  \right) \\
 \le & \ 2(1+\alpha) \cdot \sum_{t=T(\varepsilon)}^{\infty} \left( 1 - \delta(\bx^{\pidelay}(t)) \right)  \text{ (By Equation~\eqref{eq:argument-2-proof-good}) } \\
 \le & \ 2(1+\alpha)(1-\delta_0) T_0 \cdot \sum_{k=k(\varepsilon)}^{\infty} (1-\theta)^k \text{ (By Equation~\eqref{eq:argument-1-proof-good}) } \\
 = & \ 2(1+\alpha)(1-\delta_0) (1-\theta)^{k(\varepsilon)} \cdot \frac{T_0}{\theta}. 
\end{align*}
It suffices to choose $k(\varepsilon) := \left\lceil \log_{1-\theta} \frac{\varepsilon \cdot \theta}{ 2 (1+\alpha)(1-\delta_0) \cdot T_0} \right\rceil$ to deduce that 
$\norme{G^{\pimax}(\bx) - G^{\pimax}(\bx,T(\varepsilon))} \le \varepsilon$ for all $\bx \in \Delta$. Since the choice of $\varepsilon > 0$ is arbitrary, we conclude by the uniform 
convergence theorem that $G^{\pimax}(\bx)$ is a continuous function defined over all $\bx \in \Delta$. 
\end{proof}

\subsection{Discussion on the Reachability Assumption}  \label{subsec:discussion-reachability}

We first remark that it is crucial to utilize the refined static Problem~\eqref{eq:problem-formulation-stationary-multichain} for computing the optimal stationary point in multichain 
models. For instance, consider \( \pp^0 = \pp^1 = \begin{pmatrix} 1 & 0 \\ 0 & 1 \end{pmatrix} \), $\br^0 = (1,0)$, $\br^1 = (0,1) $, and with any $0 < \alpha < 1$. This model is 
non-communicating. If we solve the conventional static problem described after Problem~\eqref{eq:problem-formulation-stationary-multichain}, then the optimal stationary state is $\bx^* = 
(\alpha,1-\alpha)$ and is unreachable unless $\bx_{\text{init}} = (\alpha,1-\alpha)$; while if we take the initial state $\bx_{\text{init}}$ into account and solve the refined static 
Problem~\eqref{eq:problem-formulation-stationary-multichain} with $\bx(0) = \bx_{\text{init}}$, the optimal stationary state is $\bx^* = \bx_{\text{init}}$ itself and becomes reachable. 

We now compare previous methods based on the single-armed MDP with our approach in a multichain model. Set \( \pp^0 = 
\begin{pmatrix} 1 & 0 & 0 & 0 \\ 0.9 & 0 & 0.1 & 0 \\ 0 & 0 & 0.9 & 0.1 \\ 0 & 0 & 0.1 & 0.9 \end{pmatrix} \), \( \pp^1 = 
\begin{pmatrix} 0.9 & 0.1 & 0 & 0 \\ 0.1 & 0.9 & 0 & 0 \\ 0 & 0 & 1 & 0 \\ 0.1 & 0 & 0.9 & 0 \end{pmatrix} \), $\br^0 = (0,0,1,1)$, $\br^1 = (1,1,0,0)$ and $\alpha = 0.5$. The initial state
is $\bx_{\text{init}} = (0.4,0,0.6,0)$, with the corresponding optimal stationary point $\bx^* = (0.25,0.25,0.25,0.25), \bu^* = (0.25,0.25,0,0)$. It turns out that a \emph{stationary} 
optimal single-armed policy does not exist for this problem, due to the state-action frequency constraint. As highlighted in \cite{hordijk1984constrained}, exploring the broader class of 
Markovian policies is necessary to find an optimal solution. Consequently, approaches based on stationary optimal single-armed policies, as seen in \cite{hong2023restless,hong2024unichain}, 
are inadequate. Similarly, for the fluid priority policy \cite{Ve2016.6}, any priority "permutation of (1,2) > permutation of (3,4)" falls within this category. However, prioritizing "4 > 
3" for this particular initial state $\bx_{\text{init}} = (0.4,0,0.6,0)$ is essential for asymptotic optimality; without it, the 0.6 portion of arms in states $\{3,4\}$ cannot transition to 
states $\{1,2\}$. Policies based on the single-armed MDP typically lack the capability to make such critical distinctions. In contrast, since the model is communicating and aperiodic in the 
sense of Lemma~\ref{lem:property-of-pi0}, we deduce that $\bx^*$ is reachable and consequently by Theorem~\ref{thm:existence-of-good-control-rule}, the policy $\piN_{\text{align\&}\ell}$ is 
asymptotically optimal.

\section{Numerical Experiments}  \label{sec:numerical-example}

It can be seen that the simple linear control $\pil$ that has played a key role in Theorem~\ref{thm:existence-of-good-control-rule} may not be the best candidate for the task of steering.  
Ideally the steering control $\pi_{\text{steer}}$ should be designed to take any vector $\bx$ towards $\bx^*$ in the most reward-efficient manner. Motivated by the Model Predictive Control 
(MPC), a such control can be constructed by solving a finite look-ahead window $T_w$ version of Problem~\eqref{eq:problem-CEC-infinite-horizon}, which is a linear program 
(\cite{gast2023linear}), followed by adopting the first control from this solution, see e.g. \cite[Section 3]{damm2014exponential}. We refer to this policy using MPC steering strategy as 
$\piN_{\text{align\&MPC}}$. In this section, we set a look-ahead window of $T_w = 100$, noting that the MPC appears to stabilize at $50$ steps ahead on the examples encountered 
\cite{damm2014exponential}. Simulations are conducted over a horizon of $T=10000$. As highlighted in Section~\ref{subsec:discussion-reachability}, previous methods generally fail with 
multichain models. Hence our focus in this section lies on unichain models and the performance differences for finite $N$.

We first consider an example with three states that has been studied in \cite{gast2023exponential,hong2023restless}. A noticeable feature of this example is that the Whittle index policy, 
which is actually the best-performed priority policy among all possible priorities, is not asymptotically optimal, as can be inferred from Figure~\ref{fig:example-1}. We also plot 
$\delta(\bX(t))$ over a sample run for the three policies with $N=1000$ once each $5$ time steps for the first $200$ time steps. The oscillation of $\pi_{\text{priority}}$ presented here is 
caused 
\begin{wrapfigure}[17]{l}{0.24\textwidth} 
  \centering
  \includegraphics[width=0.24\textwidth]{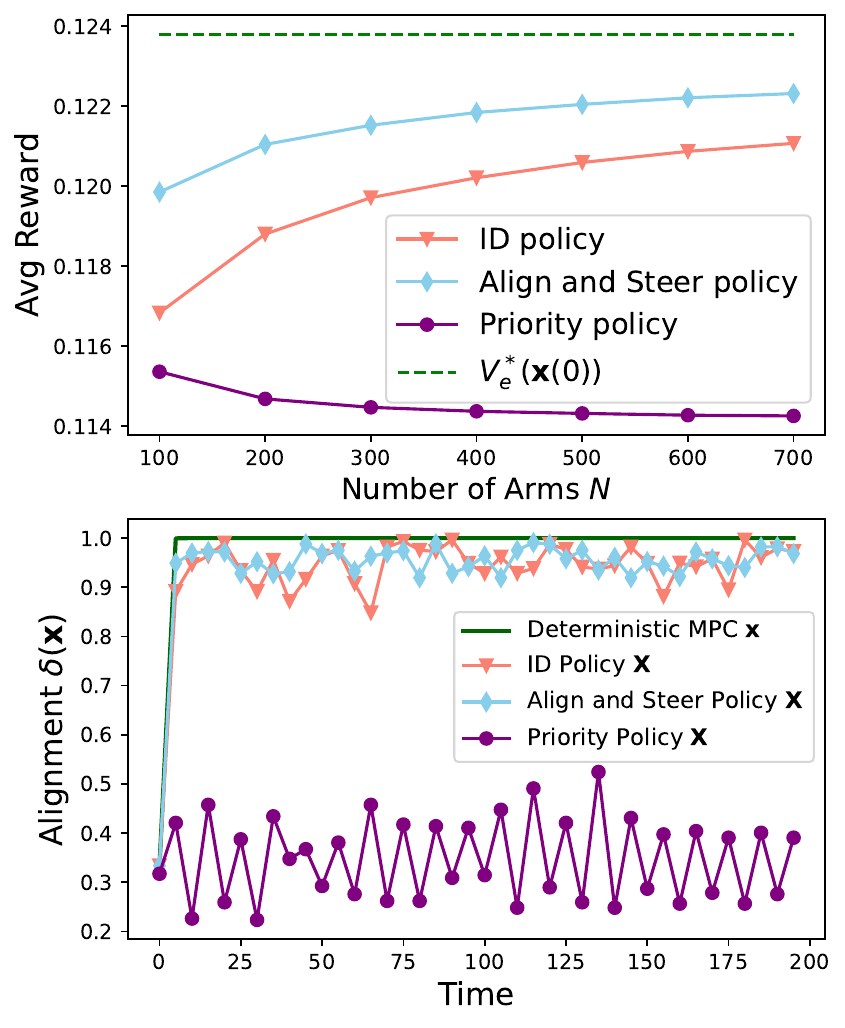}
  \caption{\small An example where no priority policy is asymptotically optimal.}
  \label{fig:example-1}
\end{wrapfigure} 
by the fact that its dynamics is attracted to a period-$2$ cycle. The ID policy is introduced in \cite{hong2024unichain}. Both the ID policy and the align and steer policy proposed in the  
current paper are asymptotically optimal, but the later performs better. We believe that this is because the closed-loop MPC  is constantly driving the steering part to align with $\bx^*$ 
in the most reward-efficient way. We note that it is also possible to plot the alignment of the control variable with $\bu^*$, which exhibits similar behavior compared to the alignment of 
the state variable. 

We next consider an example with eight states that has been proposed in \cite{hong2023restless}. A noteworthy feature of this example is that there are a total number of $36$ priority 
\begin{wrapfigure}[17]{r}{0.23\textwidth} 
  \centering
  \includegraphics[width=0.23\textwidth]{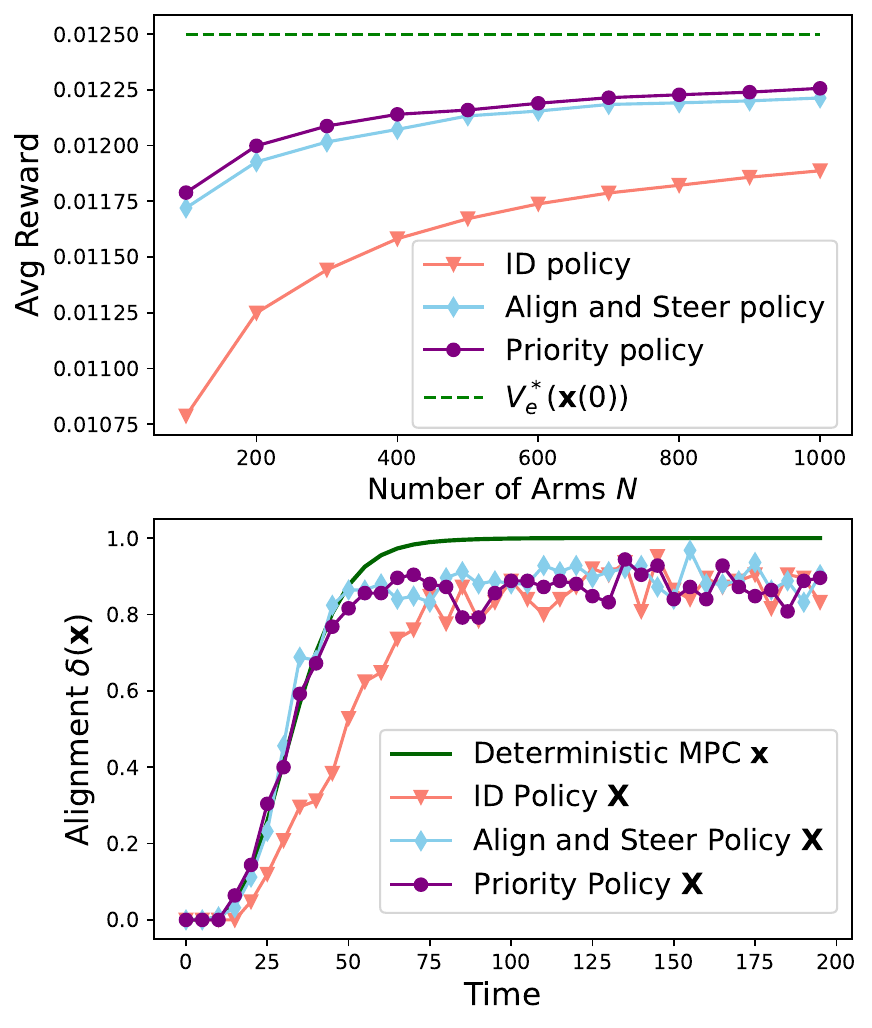}
  \caption{\small An example where certain priority policies perform slightly better than $\piN_{\text{align\&MPC}}$.}
  \label{fig:example-2}
\end{wrapfigure}
policies that are asymptotically optimal: "any permutation of (2,3,4) > 1 > 5 > any permutation of (6,7,8)" are all observed to be asymptotically optimal policies. The performance of these 
priority policies are actually slightly superior than $\piN_{\text{align\&MPC}}$, as can be seen from Figure~\ref{fig:example-2}. We visualize that the steering of both policies align with 
the deterministic MPC, while the steering of the ID policy is different. 

From the numerical analysis presented in this section, it is evident that $\piN_{\text{align\&MPC}}$ consistently delivers outstanding performance. However, given its computational 
efficiency and simplicity, a priority policy such as the LP-index policy from \cite{gast2023linear} should be considered at first hand. Only in instances where the global attractor property 
does not seem to be fulfilled by these priority policies should we consider resorting to the ID policy or $\piN_{\text{align\&MPC}}$. The former necessitates sampling of arm actions and 
their rectification at every decision epoch, which requires at least $\calO(N)$ time; the latter entails solving a linear program being the transient problem of 
\eqref{eq:problem-CEC-infinite-horizon} with a horizon of $T_w$ at each time step.



\section{Extension and Conclusion}  \label{sec:extension-and-conclusion}

\subsection{On the Generalization to Weakly-Coupled MDPs}   

The weakly-coupled MDP is a substantial and natural extension of the restless multi-armed bandit, characterized by each (homogeneous) arm having multiple actions (i.e., $\abs{\calA} 
> 2$) and the imposition of multiple budget constraints on the bandit. To ensure problem feasibility, it is assumed that there exists an action $0$ that does not consume any resources, as 
for the RB. This topic has been explored in a series of studies, including \cite{10.1287/opre.1070.0445,gast2022lp,brown2023fluid},  within both finite-horizon and discounted 
infinite-horizon frameworks, but not within the undiscounted setting addressed in the current paper. A notable aspect of the optimal-control approach adopted in this paper is its 
straightforward applicability to weakly-coupled MDPs with minimal additional effort required, thus filling an important gap in existing research. 

Indeed, the CEC problem for weakly-coupled MDPs can be generally expressed as 
\begin{alignat}{2}
    & \underset{\pi}{\text{max}} \quad && V_{\pi}(\mathbf{x}(0)) := \limT R(\mathbf{x}(t),\mathbf{u}(t)) \label{eq:problem-CEC-infinite-horizon-WCMDP} \\
    & \text{s.t.} \quad && \mathbf{x}(t+1) =  \phi(\mathbf{x}(t),\mathbf{u}(t)), \notag \\
    & && f(\mathbf{x}(t),\mathbf{u}(t)) = \mathbf{0}, \quad g(\mathbf{x}(t),\mathbf{u}(t)) \le \mathbf{0}, \quad \forall t \ge 0. \notag 
\end{alignat}
Here, the control variable $\bu$ is a vector of size $\abs{\calS} \times \abs{\calA}$; $R(\bx,\bu)$ represents a general linear function denoting the instant-reward; $\phi(\bx,\bu)$ is a 
linear function describing the expected Markovian transition as in Equation~\eqref{eq:deterministic-transition}; and $f(\bx,\bu)$, $g(\bx,\bu)$ are linear functions related to budget and 
problem structure constraints. Leveraging results from \cite{altman1995linear, hordijk1984constrained}, which focus on undiscounted average-reward multichain MDPs with linear state-action 
frequency constraints, we can deduce as in the RB case the relationships in Equation~\eqref{eq:relation-of-problem-values} for the various optimization problems. Consequently, the approach 
outlined in this paper can be applied to this more general context as well. The crucial aspect that facilitates this extension is the \emph{linearity} of the CEC optimal-control 
Problem~\eqref{eq:problem-CEC-infinite-horizon-WCMDP}.

\subsection{Conclusion} \label{subsec:conclusion-open-questions}

In this work, we have introduced an optimal-control framework for the undiscounted infinite-horizon $N$-armed RB problem, focusing on relaxing hard constraints to expected trajectory 
constraints at each time step, unlike traditional methods that average these constraints over time. This approach, balancing complexity between overly-simplified single-armed MDPs and 
intractable N-armed RB problems, allows us to derive asymptotically optimal policies by steering the system towards an optimal stationary state within a deterministic framework. Future 
research directions include: 

\subsubsection{The Lipschitz-Continuity of $G^\pi(\bx)$} Under the generality considered in this work, the possibility of ensuring Lipschitz-continuity for $G^\pi(\bx)$, which implies 
$\calON$ convergence rates of the induced policy $\piN$ \cite{gast2023exponential}, remains open. The applicability of Lyapunov-function-based proof techniques from single-armed MDPs to 
multichain scenarios \cite{hong2024unichain}, or finding counter-examples would significantly advance our understanding.

\subsubsection{The Exponential Turnpike Property and Choice of Lookahead Window} Investigating the exponential turnpike property's role (\cite{damm2014exponential}) in determining the finite 
lookahead window $T_w$ for MPC controls could greatly impact the efficiency of applying our framework in practice. This exploration could also yield crucial insights into the dynamics of RB 
optimal-control problems.

\bibliographystyle{plain} 
\bibliography{reference}

\section{Proof of A Technical Lemma}  \label{appendix:proof-lemmas}

\begin{lem}(CEC is an Upper Bound)  \label{lem:CEC-is-upperbound}
Let $V_{\piN}(\bx(0))$ (resp. $V_{\pi}(\bx(0))$) denote the optimal value of Problem~\eqref{eq:problem-formulation-population-representation-infinite-horizon} (resp. 
Problem~\eqref{eq:problem-CEC-infinite-horizon}). Then it holds true that $V_{\piN}(\bx(0)) \le V_{\pi}(\bx(0))$. 
\end{lem}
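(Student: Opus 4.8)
The plan is to relax the stochastic recursion to its conditional expectation and thereby exhibit, for every feasible policy $\piN$, a \emph{deterministic} trajectory that is feasible for the CEC Problem~\eqref{eq:problem-CEC-infinite-horizon} and collects exactly the same expected reward. Concretely, I would fix a feasible policy $\piN$ started from $\bx(0)$ and set $\bar{\bx}(t) := \expect{\bX(t)}$ and $\bar{\bu}(t) := \expect{\bU(t)}$, the expected state and control trajectories under $\piN$. The inequality would then be obtained at each finite horizon and promoted to the average-reward criterion by normalising and letting $T \to \infty$; the ``induction on the horizon'' referred to in the text is the elementary induction showing that $(\bar{\bx}(t),\bar{\bu}(t))$ obeys the deterministic dynamics. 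Equivalently one could induct directly on the value-to-go functions, using that the CEC value function is concave so that Jensen's inequality absorbs the noise $\calE$; I find the expected-trajectory formulation cleaner because it sidesteps proving concavity.

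The core of the argument rests on three linearity/averaging facts. First, since $\phi$ is linear and $\expect{\calE(\bX(t),\bU(t))} = \bzero$ by Lemma~\ref{lem:Markovian-transition-analysis} (using the tower property to remove the conditioning), I get $\bar{\bx}(t+1) = \expect{\phi(\bX(t),\bU(t)) + \calE(\bX(t),\bU(t))} = \phi(\bar{\bx}(t),\bar{\bu}(t))$, so the expected trajectory satisfies the CEC state equation. Second, because $\bU(t) \in \calUN(\bX(t)) \subseteq \calU(\bX(t))$ almost surely, we have $\bU(t)\cdot\bone^\top = \alpha$ and $\bzero \le \bU(t) \le \bX(t)$ almost surely; taking expectations and using that $\Delta$ is convex yields $\bar{\bu}(t)\cdot\bone^\top = \alpha$ and $\bzero \le \bar{\bu}(t) \le \bar{\bx}(t)$, i.e. $\bar{\bu}(t) \in \calU(\bar{\bx}(t))$. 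Third, $R$ is linear, so $\expect{R(\bX(t),\bU(t))} = R(\bar{\bx}(t),\bar{\bu}(t))$. Summing over $t = 0,\dots,T-1$, the expected total reward of $\piN$ equals the total reward along the feasible CEC trajectory $(\bar{\bx},\bar{\bu})$, which is at most the optimal finite-horizon value of Problem~\eqref{eq:problem-CEC-infinite-horizon}.

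It remains to transfer this finite-horizon domination to the undiscounted average-reward values. Dividing by $T$ and passing to the limit gives $V_{\piN}(\bx(0)) \le \limsup_{T\to\infty}\tfrac1T\, W_T(\bx(0))$, where $W_T$ denotes the optimal total reward of the $T$-horizon CEC problem. The step I expect to be the main obstacle is the identification $\limsup_{T\to\infty}\tfrac1T W_T(\bx(0)) \le V_{\pi}(\bx(0))$: the per-period value of the finite-horizon control problem must not exceed the infinite-horizon \emph{stationary} optimum $V_{\pi}(\bx(0))$. This is precisely a turnpike / average-reward-convergence statement --- the transient excess of a finite-horizon optimum over the optimal gain should be controlled by an $O(1)$ bias term --- and I would establish it by invoking the standard average-reward dynamic-programming theory for the linear control Problem~\eqref{eq:problem-CEC-infinite-horizon} (as in the finite-horizon LP route of \cite{gast2023linear} and the state--action-frequency analysis of \cite{altman1995linear,hordijk1984constrained}), taking care that this characterisation of $V_\pi$ does not become circular with the remaining inequalities in Equation~\eqref{eq:relation-of-problem-values}. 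A related subtlety to handle is that $(\bar{\bx},\bar{\bu})$ is a priori only a feasible, possibly \emph{non-stationary}, control trajectory, so the reduction to the stationary optimum $V_\pi$ must pass through this limit rather than through a direct pointwise realisation by a single stationary control rule.
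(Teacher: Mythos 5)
Your core argument is correct, and it takes a genuinely different route from the paper's. The paper proceeds \emph{backwards}, by induction on the horizon at the level of optimal value functions: it writes the dynamic-programming recursion for $V_{\piN}(\bX,T)$, removes the noise term by combining $\expect{\calE(\bX,\bU)\mid \bX,\bU}=\bzero$ (Lemma~\ref{lem:Markovian-transition-analysis}) with Jensen's inequality applied to a claimed concavity of the value function, enlarges $\calUN(\bX)$ to $\calU(\bX)$, and invokes the induction hypothesis to obtain $V_{\piN}(\bX,T)\le V_{\pi}(\bX,T)$, finally dividing by $T$ and letting $T\to\infty$. You proceed \emph{forwards}: for a fixed policy $\piN$ the expected trajectory $(\bar{\bx}(t),\bar{\bu}(t))$ is itself a feasible CEC trajectory with exactly the same per-step reward, using only linearity of $\phi$ and $R$, the tower property, and preservation of the linear constraints under expectation; all of these steps are sound (your appeal to convexity of $\Delta$ is unnecessary --- monotonicity of expectation suffices --- but harmless). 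Your route buys something real: it avoids the paper's appeal to concavity of $V_{\piN}(\cdot,T-1)$, which the paper asserts without proof and which is delicate, since that function is a priori defined only on the lattice $\DeltaN$ and must be extended to $\Delta$ before Jensen's inequality can even be stated; linearity gives you exact identities instead. What the paper's route buys is that the comparison with the finite-horizon CEC optimum is built into every step of the recursion, whereas you must introduce it through the single domination by $W_T$. Finally, the obstacle you single out --- passing from $\limsup_T W_T(\bx(0))/T$, i.e.\ from the average reward of a feasible but possibly non-stationary trajectory, to the stationary-rule optimum $V_\pi(\bx(0))$ --- is real, but you should know the paper does not resolve it either: its closing line ``take the limit $T\to\infty$'' hides exactly the same issue, because $V_\pi(\bX,T)$ is likewise attained by time-dependent controls. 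Your proposed repair is the right one: by the state-action-frequency theory of \cite{hordijk1984constrained,altman1995linear}, the Ces\`aro limit points of any feasible CEC trajectory started from $\bx(0)$ are feasible for the refined static Problem~\eqref{eq:problem-formulation-stationary-multichain}, which bounds every trajectory average by $V^*_e(\bx(0))$; that is the inequality the chain \eqref{eq:relation-of-problem-values} actually uses downstream, while the literal bound by $V_\pi$ requires in addition either the observation that, for deterministic dynamics, an optimal trajectory can be realized by a stationary rule (replay the optimal continuation whenever a state is revisited), or the identity $V_\pi = V^*_e$ that holds under the paper's reachability assumption.
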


\begin{proof}
We show by induction on the horizon $T$. Denote by $V_{\piN}(\bx(0),T)$ the value multiplied by $T$ of the finite-horizon-$T$ version of 
Problem~\eqref{eq:problem-formulation-population-representation-infinite-horizon}, with a similar notation $V_{\pi}(\bx(0),T)$ for Problem~\eqref{eq:problem-CEC-infinite-horizon}). Suppose 
that $V_{\piN}(\bX,T-1) \le V_{\pi}(\bX,T-1)$ holds for all $\bX \in \Delta$, and denote by the instant-reward function $R(\bX,\bU) = (\bX-\bU) \cdot (\br^0)^\top + \bU \cdot (\br^1)^\top$. 
Then 
  \begin{align*}
    & V_{\piN}(\bX,T)  \\
    = &  \max_{\bU \in \calUN(\bX)} R(\bX,\bU) \\
    & \qquad + \expect{V_{\piN} \left( \phi(\bX,\bU) + \calE(\bX,\bU), T-1 \right) \mid \bX, \bU } \\
    \le & \max_{\bU \in \calUN(\bX)} R(\bX,\bU) \\
    & \qquad +  V_{\piN} \left( \expect{ \phi(\bX,\bU) + \calE(\bX,\bU) \mid \bX, \bU }, T-1 \right) \\
    & \qquad \text{(By concavity of the value function.)} \\
    = & \max_{\bU \in \calUN(\bX)} R(\bX,\bU) +  V_{\piN} \left(  \phi(\bX,\bU), T-1 \right)  \\
     & \qquad \text{(By Lemma~\ref{lem:Markovian-transition-analysis}.)} \\
    \le & \max_{\bU \in \calU(\bX)} R(\bX,\bU) +  V_{\piN} \left(  \phi(\bX,\bU), T-1 \right)  \\
     & \qquad \text{(Since $\calUN(\bx) \subset \calU(\bx)$.)} \\
    \le & \max_{\bU \in \calU(\bX)} R(\bX,\bU) +  V_{\pi} \left(  \phi(\bX,\bU), T-1 \right)  \\
    & \qquad \text{(By induction hypothesis.)} \\
    = & V_{\pi}(\bX,T).
  \end{align*}  
To conclude the proof it suffices to take the limit $T \rightarrow \infty$ in the relation $V_{\piN}(\bX,T)/T \le V_{\pi}(\bX,T)/T$. 
\end{proof}

\section{Proof of Theorem~\ref{thm:cv-rate}}  \label{appendix:proof-theorem-1}

\textbf{Theorem~\ref{thm:cv-rate} restated:} Fix an initial state $\bx_{\text{init}}$. For a stationary control rule $\pi$ of Problem~\eqref{eq:problem-CEC-infinite-horizon} with an optimal 
stationary point $(\bx^*, \bu^*)$ and optimal value $V^*_e(\bx(0))$ defined in Problem~\eqref{eq:problem-formulation-stationary-multichain} with $\bx(0) = \bx_{\text{init}}$, consider the 
function $G^{\pi}(\bx)$ defined in Equation~\eqref{eq:bias}. If $G^{\pi}(\bx)$ is a continuous function over $\bx \in \Delta$ (under the $\mathcal{L}^1$-norm). Then the induced stationary 
policy $\piN$ for Problem~\eqref{eq:problem-formulation-population-representation-infinite-horizon} in Definition~\ref{def:rounding-policy} is asymptotically optimal: $\limN 
V_{\piN}(\bx(0)) = V^*_e(\bx(0))$. 

\begin{proof}
Given $\bX \in \DeltaN$ and a policy $\piN$, we denote $\bX^{\piN}(t)$ as the random variable representing the state of the stochastic system at time $t$ under policy $\piN$. That is 
$\bX^{\piN}(0) = \bX$ and for $t \ge 0$, 
\begin{equation*}
  \bX^{\piN}(t+1) \overset{d}{=} \phi(\bX^{\piN}(t), \bU^{\piN}(t)) + \mathcal{E}(\bX^{\piN}(t), \bU^{\piN}(t)).
\end{equation*}
This should be contrasted with the notation $\bx^{\pi}(t)$ that concerns deterministic trajectory, which is defined as $\bx^{\pi}(0) = \bx$ and for $t \ge 0$, 
\begin{equation*}
  \bx^{\pi}(t+1) = \phi(\bx^{\pi}(t), \bu^{\pi}(t)).
\end{equation*}
Denote by $\bX^{\piN}(\infty)$ the steady state of the induced Markov chain of the $N$-armed bandit under the stationary policy $\piN$, and by $\bU^{\piN}(\infty)$ the corresponding steady 
state of the controls. By stationarity, the random variable $( \bX^{\piN}(\infty) )^{\piN}(1)$ has the same law as $\bX^{\piN}(\infty)$. 

Since 
\begin{align*}
 & V_e^*(\bx(0)) - V_{\piN}(\bx(0)) \\
 = & \ \bx^* \cdot (\br^0)^\top + \bu^* \cdot (\br^1-\br^0)^\top \\
 & \qquad - \expect{ \bX^{\piN}(\infty) \cdot (\br^0)^\top + \bU^{\piN}(\infty) \cdot (\br^1-\br^0)^\top } \\
 \le & \norme{ \expect{ \left( \bX^{\piN}(\infty), \bU^{\piN}(\infty) \right) } - (\bx^*,\bu^*) } \cdot 2 r_{\text{max}},
\end{align*}
where $r_{\text{max}} := \max_{s,a} \abs{r_s^a}$. In what follows it suffices to bound the $\calL^1$-norm on the right hand side of the above formula. 

For \emph{any} $\bx = \bx^{\pi}(0)$, by the definition of $G^{\pi}(\bx)$, we have 
\begin{align}
  & G^{\pi}(\bx^{\pi}(0)) - G^{\pi}(\bx^{\pi}(1)) \nonumber \\
  = & \sum_{t=0}^{\infty} (\bx^{\pi}(t), \bu^{\pi}(t)) - (\bx^*,\bu^*) \nonumber \\
  & \qquad + \sum_{t=0}^{\infty} (\bx^{\pi}(t+1), \bu^{\pi}(t+1)) - (\bx^*,\bu^*) \nonumber \\
  = & \sum_{t=0}^{\infty} (\bx^{\pi}(t), \bu^{\pi}(t)) - (\bx^*,\bu^*) \nonumber \\
  & \qquad + \sum_{t=1}^{\infty} (\bx^{\pi}(t), \bu^{\pi}(t)) - (\bx^*,\bu^*) \nonumber \\
  = & (\bx^{\pi}(0), \bu^{\pi}(0)) - (\bx^*,\bu^*). \label{eq:relation-on-G-1}
\end{align}
From which we infer that 
\begin{align}
  &  \expect{ \left( \bX^{\piN}(\infty), \bU^{\piN}(\infty) \right) }  - (\bx^*,\bu^*) \nonumber \\
  = & \ \expect{ \expect{G^{\pi} \left( \bX^\pi(0) \right) - G^{\pi} \left( \bX^{\pi}(1) \right) \mid \bX^{\piN}(\infty) = \bX^{\pi}(0)} } \nonumber \\
  = & \ \mathbb{E} \bigg[ G^{\pi} \left( \bX^{\piN}(\infty) \right) - G^{\pi} \left( \left( \bX^{\piN}(\infty) \right)^{\pi}(1) \right) \bigg]  \nonumber \\
  = & \ \mathbb{E} \bigg[ G^{\pi} \left( \left( \bX^{\piN}(\infty) \right)^{\piN}(1) \right) - G^{\pi} \left( \left( \bX^{\piN}(\infty) \right)^{\pi}(1) \right) \bigg]  \nonumber \\
  = & \ \expect{\zeta \left(  \bX^{\piN}(\infty) \right) }, \label{eq:relation-on-G-2}
\end{align}
where for $\bX \in \DeltaN$, we define
\begin{align}  
  & \zeta(\bX) := \expect{G^{\pi} \left( \phi(\bX,\piN(\bX)) + \calE(\bX,\piN(\bX)) \right)} \nonumber \\
  & \qquad - G^{\pi} \left( \phi(\bX,\pi(\bX)) \right).  \label{eq:zeta-x}
\end{align}
We shall show below that 
\begin{equation}\label{eq:auxilary-result}
  \limN \max_{\bX \in \DeltaN} \norme{\zeta(\bX)} = 0
\end{equation}
And subsequently from Equation~\eqref{eq:relation-on-G-2} we deduce that 
\begin{align*}
  & \limN \norme{ \expect{ \left( \bX^{\piN}(\infty), \bU^{\piN}(\infty) \right) }  - (\bx^*,\bu^*) } \\
  & \qquad \le  \limN \max_{\bX \in \DeltaN} \norme{\zeta(\bX)} = 0.
\end{align*}
So we conclude that $\limN V_{\piN}(\bx(0)) = V^*_e(\bx(0))$.

We now use the uniform continuity of the function $G^\pi(\bx)$ to prove the claim in Equation~\eqref{eq:auxilary-result}. Denote by $L_{\phi} > 0$ the Lipschitz constant of the linear map 
$\phi$, and by $\Phi(\bX) := \phi(\bX,\pi(\bX))$. Define 
\begin{equation} \label{eq:CLT-and-rounding}
  \widetilde{\calE}(\bX) :\overset{d}{=} \phi(\bX,\piN(\bX)) + \calE(\bX,\piN(\bX)) - \Phi(\bX).
\end{equation}
By our construction of the policy $\piN$ in Definition~\ref{def:rounding-policy}, we deduce that  
\begin{align*}  
  & \expect{ \norme{ \widetilde{\calE}(\bX) } \mid \bX } \le  \frac{S L_{\phi}}{N} + \frac{\sqrt{S}}{\sqrt{N}};  \\ 
  & \proba{ \norme{\widetilde{\calE}(\bX)} \ge \xi \mid \bX} \le 2S \cdot e^{-N (\xi/2)^2/S^2}.
\end{align*}
Fix $\varepsilon > 0$ arbitrary, there exists $\xi > 0$ such that 
\begin{equation*}
  \norme{\widetilde{\calE}(\bX)} \le \xi \Rightarrow \norme{ G^\pi \left( \Phi(\bX) + \widetilde{\calE}(\bX) \right) - G^\pi \left( \Phi(\bX) \right) } \le \varepsilon.
\end{equation*}
Denote by $\overline{G} := \max_{\bX \in \Delta} G^\pi(\bx) < \infty$, we deduce that 
\begin{align*}
  & \max_{\bX \in \DeltaN} \norme{\zeta(\bX)} \\
 \le & \max_{\bX \in \DeltaN} \expect{ \norme{ G^\pi \left( \Phi(\bX) + \widetilde{\calE}(\bX) \right) - G^\pi \left( \Phi(\bX) \right) } \mid \bX } \\
 \le & \ 2 \overline{G} \cdot \proba{ \norme{\widetilde{\calE}(\bX)} \ge \xi \mid \bX} + \varepsilon \\
 \le & \ 2S \cdot e^{-N (\xi/2)^2/S^2} + \varepsilon \le 2 \varepsilon 
\end{align*}
for $N$ sufficiently large. Since the choice of $\varepsilon > 0$ is arbitrary, we infer that $\limN \max_{\bX \in \DeltaN} \norme{\zeta(\bX)} = 0$. 
\end{proof}

\end{document}